\newtheorem{theorem}{Theorem}[section]
\newtheorem{lema}[theorem]{Lemma}
\newtheorem{corollary}[theorem]{Corollary}
\newtheorem{definition}[theorem]{Definition}
\title{On some classes of bivalent and trivalent planar graphs}
\author[1]{\normalsize{Jorge Alencar}\thanks{jorgealencar@iftm.edu.br}}
\author[2]{\normalsize{Jean-Guy Caputo }\thanks{jean-guy.caputo@insa-rouen.fr}}
\author[3]{\normalsize{Leonardo de Lima} \thanks{leonardo.delima@ufpr.br}}
\author[2]{\normalsize{Arnaud Knippel} \thanks{arnaud.knippel@insa-rouen.fr}}
\affil[1]{Instituto Federal de Educa\c{c}\~{a}o, Ci\^{e}ncia e Tecnologia do Tri\^angulo Mineiro, Brasil}
\affil[2]{Laboratoire de Math\'ematiques, INSA Rouen Normandie, 76800 Saint-Etienne du Rouvray, France}
\affil[3]{Programa de P\'os-Gradua\c c\~ao em Matem\'atica, Universidade Federal do Paran\'a, Brazil}
\date{\ }
\begin{document}
\maketitle

\begin{abstract}
A graph is called bivalent or trivalent if there exists an eigenvector of the graph Laplacian composed from $\{-1,1\}$ or $\{-1,0,1\}$, respectively. 
These bivalent and trivalent eigenvectors 
are important for engineering applications, in
particular for vibrating systems. In this article, we 
determine the structure of bivalent and trivalent graphs in the 
following planar graph families: trees, unicyclic, bicyclic, and cactus. 
\end{abstract}

{\bf Keywords:} \\
Laplacian matrix, Laplacian eigenvector, Bivalent graph, Trivalent graph. \\
{\bf MSC classification 2010:} 05C50, 05C35



\section{Introduction}\label{intro}

Let $G=(V,E)$ be a connected graph with vertex set $V=\{1,2,\ldots,n\}.$ 
The Laplacian matrix of $G$ is given by $L(G) = D-A,$ where 
$D=diag(d_{1},\ldots,d_{n})$ is the diagonal matrix with entries equal 
to the degree of each vertex $i \in V$ denoted by $d_{i}$ and $A$ is the 
adjacency matrix. It is well-known that $L(G)$ is symmetric, semidefinite 
positive and the smallest eigenvalue is equal to zero. The 
eigenvalues of $L(G)$ can be arranged as 
$\lambda_1 =0 \leq \lambda_2  \cdots \leq \lambda_{n}.$ 
The eigenspace associated with an eigenvalue $\lambda$ of $L(G)$ 
is denoted by $\mathcal{E}_{L}(\lambda) = \{\mathbf{x} \in 
\mathbb{R}^{n}, \; \mathbf{x}\ne 0: L(G)\mathbf{x} = \lambda \mathbf{x} \}.$ 
The first eigenvalue $\lambda_1=0$ is associated to an 
eigenvector with equal entries; it's multiplicity is the number of connected 
components of the graph \cite{crs01}.

Relating the eigenspace of a graph to its structural properties is an
important topic of spectral graph theory, and there has been a 
growing interest in this subject in recent years. Eigenvectors
of the graph Laplacian are also important in applications
such as vibrations of chemical molecules \cite{crs01}-\cite{cks13}.
In particular, eigenvectors with entries from 
sets $\{-1,1\}$ and $\{-1,0,1\}$ can lead to the concen-\\-tration of
energy when nonlinearities are added to the model \cite{ckka18},\cite{ckk22}.
For all these reasons, we consider the following problem: \\
\noindent \textit{ {\bf P}: Which connected graphs affording a
Laplacian eigenvalue $\lambda$ have eigenvectors
with entries only from sets $\{-1,1\}$ and $\{-1,0,1\}$?} \\

Graphs with Laplacian or signless Laplacian 
eigenvector entries from the sets $\{-1,1\}$ or $\{-1,0,1\}$ have been 
studied in \cite{WHD},  \cite{AL21}, \cite{ALN23}, \cite{barik2011} and
\cite{dam18}. 
The problem above has also been adressed for the adjacency matrix $A$
instead of the graph Laplacian.
Akbari \emph{et al.} in \cite{AAGK06} proved that 
the null space of the adjacency matrix of every forest 
has a $\{-1, 0, 1\}$- basis. Sanders in \cite{S08} proved that for 
every cograph there exist bases of the eigenspaces for the 
eigenvalues $0$ and $-1$ that consist only of vectors with 
entries from $\{-1,0,1\}.$ Sander, Sander, and Kriesell \cite{SSK07} 
characterized all unicyclic graphs for which there exists a basis of 
the kernel that consists only of vectors with entries from $\{-1,0,1\}.$

We recall the following \\
\begin{definition}
A bivalent (resp. trivalent) eigenvector of the graph Laplacian is
composed of $\{-1,+1\}$ (resp. $\{-1,0,+1\}$). \\
A graph affording a bivalent (resp. trivalent) eigenvector is termed 
bivalent (resp. trivalent). 
\end{definition}
In this article, we answered the problem {\bf P} 
by characterizing bivalent and trivalent graphs in some classical families of planar graphs. Planar graphs are of particular interest for many applications. In particular, we characterize bivalent and trivalent trees, unicyclic, bicyclic, and cactus graphs.  Our results indicate that very restricted Laplacian eigenvalues ($\lambda \in \{1,2,3,4\}$) 
are associated with bivalent or trivalent eigenvectors in all studied families.

The article is organized as follows. Section 2 presents preliminary results,
in particular the edge principle by Merris \cite{merris98} which will be useful for
many of our proofs. We also characterize bivalent and trivalent graphs with a leaf.
Section 3 characterizes bivalent and trivalent trees and    
unicyclic graphs. Building on these results, Section 4 identifies all
bivalent and trivalent bicyclic graphs. We consider general
multicyclic graphs in Section 5 and in particular we give a formula for the
cyclomatic number $c$ of regular bipartite graphs, showing which values of
$c$ are possible. We then characterize bivalent and trivalent cactus graphs.

\section{Preliminaries}\label{sec:prel}

We use the terminology vertices and nodes interchangeably as well as 
edges and links. Let $\mathbf{v}=(v_1,\ldots,v_{n})^{T}$ be an 
eigenvector of $L(G),$ we recall the following.
\begin{definition}
A vertex $j \in V$ is a soft node for an eigenvector $v$ of 
the Laplacian of $G$ if there exist a vertex $j$ such that
$v_{j} = 0$. In other cases, the vertex $j$ is called a non-soft node. 
\end{definition}
\begin{definition}[Soft regular graph]
A graph is $d$-soft regular for an eigenvector $\mathbf{v}$ of the Laplacian if every non-soft node for $\mathbf{v}$ has the same degree $d$.
\end{definition}
Vertices $i$ and $j$ that have $v_{i}= v_{j}$ are called equal nodes. An edge connecting equal nodes is called an equal link. The next result is the edge-principle due to Merris \cite{merris98}.

We use the following two important results from Merris
and reformulate the statements.
\begin{theorem}[\textbf{Edge principle} \cite{merris98}]
\label{L2S}
Let $\mathbf{v}$ be an eigenvector of $L(G)$ affording an eigenvalue $\lambda$.
If $v_i=v_j$, then $\mathbf{v}$ is an eigenvector of $L(G^{\prime})$ affording the eigenvalue $\lambda$,
where $G^{\prime}$ is the graph obtained from $G$ by deleting 
or adding the edge $e_{ij}$ depending whether the equal link
$e_{ij}$ is an edge of $G$ or not.
\end{theorem}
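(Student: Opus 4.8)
The plan is to show that toggling the edge $e_{ij}$ alters the Laplacian by a single rank-one matrix, and that the hypothesis $v_i=v_j$ places $\mathbf{v}$ exactly in the kernel of that perturbation. First I would introduce the standard basis vectors $\mathbf{e}_i,\mathbf{e}_j\in\mathbb{R}^n$ and examine the outer product $(\mathbf{e}_i-\mathbf{e}_j)(\mathbf{e}_i-\mathbf{e}_j)^T$, noting that its only nonzero entries are $+1$ in positions $(i,i)$ and $(j,j)$ and $-1$ in positions $(i,j)$ and $(j,i)$.

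Next I would verify that this matrix is precisely the effect of toggling $e_{ij}$ on the Laplacian. If $G'$ is obtained from $G$ by adding the missing edge $e_{ij}$, then both $d_i$ and $d_j$ increase by one, which accounts for the two $+1$ diagonal entries, while the adjacency entries $A_{ij},A_{ji}$ increase by one, which accounts, after the sign in $L=D-A$, for the two $-1$ off-diagonal entries. Hence
\[
L(G') = L(G) + (\mathbf{e}_i-\mathbf{e}_j)(\mathbf{e}_i-\mathbf{e}_j)^T,
\]
and if instead $e_{ij}$ is already an edge and we delete it, the sign flips to a subtraction. These two cases are exactly the two alternatives in the statement.

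Finally, applying $L(G')$ to $\mathbf{v}$ yields
\[
L(G')\mathbf{v} = L(G)\mathbf{v} \pm (\mathbf{e}_i-\mathbf{e}_j)\bigl((\mathbf{e}_i-\mathbf{e}_j)^T\mathbf{v}\bigr),
\]
and the scalar $(\mathbf{e}_i-\mathbf{e}_j)^T\mathbf{v}=v_i-v_j$ vanishes by the equal-link hypothesis. The perturbation term therefore disappears and $L(G')\mathbf{v}=L(G)\mathbf{v}=\lambda\mathbf{v}$, so $\mathbf{v}$ is an eigenvector of $L(G')$ affording $\lambda$, as required. There is no genuine obstacle here; the only point needing care is the bookkeeping in the middle step, namely confirming that the degree changes and the adjacency changes assemble into exactly the claimed rank-one outer product, with the correct sign attached in each of the add and delete cases.
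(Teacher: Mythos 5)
Your proof is correct. Note that the paper itself offers no proof of this statement---it is quoted directly from Merris \cite{merris98}---so there is nothing to diverge from; your rank-one perturbation identity $L(G') = L(G) \pm (\mathbf{e}_i-\mathbf{e}_j)(\mathbf{e}_i-\mathbf{e}_j)^T$, combined with $(\mathbf{e}_i-\mathbf{e}_j)^T\mathbf{v} = v_i - v_j = 0$, is the standard argument, equivalent to checking the eigenvalue equation coordinatewise at $i$ and $j$, where the degree change and the new (or removed) neighbor term cancel precisely because $v_i=v_j$. The only pedantic point worth adding is that $\mathbf{v}\neq 0$, so it is indeed still an eigenvector of $L(G')$ and not merely a solution of the linear equation.
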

\begin{theorem}[Extension of soft nodes, \cite{merris98}]
\label{articulation}
Connecting one or more soft nodes to an existing soft node of a graph
does not change the eigenvalue.
\end{theorem}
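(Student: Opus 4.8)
The plan is to verify the claim directly from the eigenvalue equation of the Laplacian, treating the two operations the statement allows: adding an edge between two vertices that are already soft nodes, and attaching a brand-new vertex (which will itself be a soft node) to an existing soft node. The first operation is an immediate instance of the edge principle (Theorem~\ref{L2S}); the second requires only a short direct computation. Since finitely many such operations are applied in succession, I would argue by induction on the number of added edges, the key invariant being that every vertex that was soft remains soft after each elementary step, so that the argument composes.

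For the first operation, suppose $i$ and $j$ are both soft nodes for $\mathbf{v}$, so that $v_i = v_j = 0$. Then $v_i = v_j$, which means $e_{ij}$ is an equal link. By Theorem~\ref{L2S}, $\mathbf{v}$ remains an eigenvector affording $\lambda$ of the graph obtained by adding the edge $e_{ij}$. In particular the values at $i$ and $j$ are unchanged, so both stay soft and the hypothesis is preserved for the next edge.

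For the second operation, let $j$ be a soft node of $G$ and let $G^{\prime}$ be obtained by adjoining a new vertex $w$ joined only to $j$. I would extend the eigenvector by setting the new coordinate to zero, writing $\mathbf{v}^{\prime} = (\mathbf{v}^{T}, 0)^{T}$, and note $\mathbf{v}^{\prime} \neq 0$ since $\mathbf{v} \neq 0$. The row of the system $L(G^{\prime})\mathbf{v}^{\prime} = \lambda \mathbf{v}^{\prime}$ at the new vertex reads
\[
(L(G^{\prime})\mathbf{v}^{\prime})_{w} = 1\cdot v_{w} - v_{j} = 0 - 0 = 0 = \lambda\, v_{w},
\]
which holds. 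At the vertex $j$ the degree increases by one while the term $-v_{w} = 0$ is added to the row, so the left-hand side changes by $v_{j} - v_{w} = 0$ and the equation $(L(G^{\prime})\mathbf{v}^{\prime})_{j} = \lambda v_{j}$ is unaffected. Every remaining row coincides with the corresponding row of $L(G)\mathbf{v} = \lambda \mathbf{v}$. Hence $\mathbf{v}^{\prime}$ affords $\lambda$ for $L(G^{\prime})$, and both $w$ and $j$ are soft nodes of the new graph.

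The computation is routine; the one point needing care — and the only place I expect any real obstacle — is the bookkeeping when several soft nodes are attached to the same existing soft node simultaneously, or combined with edges added among soft nodes. I would handle this cleanly by the induction mentioned above, using that a soft node stays soft after each elementary operation. The structural fact underlying everything is that the Laplacian equation at a soft node $j$ reduces to $\sum_{k \sim j} v_{k} = 0$, a condition insensitive to the degree of $j$; attaching further zero-valued neighbours leaves this sum unchanged, which is precisely why the eigenvalue cannot move.
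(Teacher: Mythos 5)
Your proposal is correct, but it cannot be ``the same approach as the paper'' for the simple reason that the paper offers no proof at all: Theorem~\ref{articulation} is stated as a quoted result from Merris \cite{merris98}, so your direct verification fills a gap the authors delegate to the literature. Your two elementary operations are exactly the right decomposition. The row computations check out: at the new pendant vertex $w$ the equation reads $d_w v_w - v_j = 0 = \lambda v_w$ precisely because $v_j=0$, and at $j$ the row changes by $v_j - v_w = 0$, so softness of $j$ is what makes the degree increase harmless --- this is the ``insensitive to the degree'' observation you close with, and it is the heart of the matter. Adding an edge between two existing soft nodes is indeed just the equal-link case of Theorem~\ref{L2S} with $v_i=v_j=0$. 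One point worth making explicit, since the paper later invokes this theorem in a stronger form (``extending with soft nodes'' in the cactus section means attaching entire zero-valued trees and even cycles, e.g.\ the all-soft $C_5$ in Figure~7): your induction does cover this, because any zero-valued subgraph can be built up one pendant vertex at a time (each attached to an already-present soft node, or even added as an isolated zero vertex, whose row reads $0=\lambda\cdot 0$) with the remaining internal edges supplied afterwards by the edge principle, every endpoint being soft at each stage. Stating that reduction in one sentence would make your proof match the full strength in which the theorem is actually used.
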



From \cite{AL21} and \cite{dam18}, we recall general characterizations of bivalent and trivalent graphs.
\begin{theorem}[\textbf{Bivalent graphs}]
\label{carbi}
The bivalent graphs are the regular bipartite graphs, and their extensions obtained by adding edges between nodes having the same value for a bivalent eigenvector.
\end{theorem}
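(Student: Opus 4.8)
The plan is to prove the equivalence in both directions, treating the two constructive claims (that regular bipartite graphs are bivalent, and that adding equal links preserves bivalence) as the routine part, and the converse (that every bivalent graph arises this way) as the substantive part, for which the edge principle, Theorem~\ref{L2S}, is the key tool.

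For the forward direction I would take $G$ to be $d$-regular bipartite with parts $X$ and $Y$, and set $v_i=+1$ for $i\in X$ and $v_i=-1$ for $i\in Y$. Since every neighbour of a vertex lies in the opposite part, a direct computation gives $(L(G)\mathbf{v})_i = d_i v_i - \sum_{j\sim i} v_j = 2d\,v_i$ for every $i$, so $\mathbf{v}$ is a bivalent eigenvector affording $\lambda=2d$. The extension claim is then immediate from Theorem~\ref{L2S}: if $e_{ij}$ joins two nodes with $v_i=v_j$, then adding it leaves $\mathbf{v}$ an eigenvector of the new Laplacian with the same eigenvalue, and $\mathbf{v}$ still has entries in $\{-1,+1\}$, so the resulting graph is again bivalent.

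For the converse I would start with a bivalent graph $G$ and a bivalent eigenvector $\mathbf{v}\in\{-1,+1\}^{n}$ affording $\lambda$, partition the vertices by sign as $X=\{i:v_i=+1\}$ and $Y=\{i:v_i=-1\}$, and let $G'$ be obtained from $G$ by deleting every equal link, i.e. every edge whose endpoints share the same $v$-value. Applying Theorem~\ref{L2S} repeatedly, $\mathbf{v}$ remains an eigenvector of $L(G')$ affording $\lambda$, and by construction $G'$ has edges only between $X$ and $Y$, so $G'$ is bipartite. Writing $d'_i$ for the degree of $i$ in $G'$ and using that every neighbour of $i$ in $G'$ has value $-v_i$, the eigen-equation at $i$ reads $\lambda v_i = d'_i v_i - (-v_i)d'_i = 2 d'_i v_i$, whence $d'_i = \lambda/2$ for every vertex. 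Thus $G'$ is $(\lambda/2)$-regular bipartite, and $G$ is recovered from $G'$ by adding back the deleted equal links, which join nodes of equal $v$-value; this is exactly the asserted form.

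The main obstacle is precisely this regularity step: one must verify that the constraint $L(G')\mathbf{v}=\lambda\mathbf{v}$ forces a common degree on all vertices of the reduced graph, not merely bipartiteness, and the displayed computation does this while also showing that $\lambda=2d'$ is necessarily an even positive integer (positive whenever $\mathbf{v}$ is nonconstant, which holds if $\lambda\neq 0$, since then $\mathbf{v}\perp\mathbf{1}$ forces both $X$ and $Y$ nonempty with $|X|=|Y|$). I would also remark that $G'$ need not be connected, but this is harmless, since a regular bipartite graph need not be connected and the characterization concerns only the edge structure relative to $\mathbf{v}$.
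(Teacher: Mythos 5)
Your proof is correct: the degree computation $\lambda v_i = d'_i v_i + d'_i v_i$ after stripping equal links via Theorem~\ref{L2S} is exactly the right way to force $(\lambda/2)$-regularity, and your handling of the $\lambda=0$ and disconnectedness edge cases is sound. Note that the paper itself states Theorem~\ref{carbi} without proof, recalling it from \cite{AL21} and \cite{dam18}; your argument is the natural one and matches how the paper deploys the edge principle in its other proofs (e.g.\ Theorems~\ref{bitrivalent} and \ref{bivalentbicyclic}), so there is nothing to flag.
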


For trivalent graphs, we need the definition of 
\begin{definition}[Hard degree]
The hard degree $\bar d_j$ of a vertex $j$ is the number of neighbors of $j$
that are not zero. 
\end{definition}

\begin{theorem}[\textbf{Trivalent graphs}] \label{thm:tri_graphs}
Let $G$ be a graph with an eigenvector $\mathbf{v}$ associated with 
Laplacian eigenvalue $\lambda$. Then $G$ is trivalent without 
any equal links if and only if \\
(i) for each vertex $j$ valued 1 or -1, $$\lambda = d_j + \bar d_j,$$
where $d_j, \bar d_j$ are respectively the degree and 
hard degree.\\
(ii) Each soft vertex $j$ is connected to the same number of 
vertices $+1$ and $-1$.
\end{theorem}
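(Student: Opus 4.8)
The plan is to prove the characterization by directly translating the eigenvalue equation $L(G)\mathbf{v} = \lambda \mathbf{v}$ into local conditions at each vertex, exploiting the fact that $\mathbf{v}$ takes only the values $\{-1,0,+1\}$ and that there are no equal links. First I would write out the defining relation at an arbitrary vertex $j$: since $L(G) = D - A$, the $j$-th row gives
\begin{equation}
\label{eq:rowj}
(L(G)\mathbf{v})_j = d_j v_j - \sum_{k \sim j} v_k = \lambda v_j,
\end{equation}
where the sum runs over neighbors $k$ of $j$. The whole proof is then a matter of analyzing \eqref{eq:rowj} in the two cases $v_j \in \{+1,-1\}$ and $v_j = 0$, using the hypothesis that no edge joins two vertices of equal value.

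For the forward direction, suppose $G$ is trivalent with eigenvector $\mathbf{v}$ and no equal links. Consider a vertex $j$ with $v_j = 1$. The no-equal-link hypothesis forbids any neighbor $k$ with $v_k = 1$, so each neighbor has value $0$ or $-1$. Splitting $\sum_{k\sim j} v_k$ according to these two values, the zero-valued neighbors contribute nothing, and every $-1$-valued neighbor contributes $-1$; the number of such neighbors is exactly the hard degree $\bar d_j$. Hence $\sum_{k\sim j} v_k = -\bar d_j$, and substituting into \eqref{eq:rowj} yields $d_j \cdot 1 - (-\bar d_j) = \lambda$, i.e. $\lambda = d_j + \bar d_j$, which is (i); the case $v_j = -1$ is symmetric. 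For a soft vertex $j$ with $v_j = 0$, the right-hand side of \eqref{eq:rowj} is zero and the degree term vanishes, so $\sum_{k\sim j} v_k = 0$, which forces the number of $+1$-neighbors to equal the number of $-1$-neighbors — this is (ii).

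For the converse, I would assume (i) and (ii) hold for some $\{-1,0,1\}$-vector $\mathbf{v}$ and verify that $L(G)\mathbf{v} = \lambda \mathbf{v}$ holds row by row, simply reversing the computation above: at a $\pm 1$ vertex condition (i) reproduces the required identity $d_j v_j - \sum_{k\sim j} v_k = \lambda v_j$ (here one also uses that no equal link means all same-sign contributions are absent so the neighbor sum is exactly $\mp \bar d_j$), and at a soft vertex condition (ii) gives $\sum_{k\sim j} v_k = 0 = \lambda \cdot 0$. This shows $\mathbf{v}$ is a genuine eigenvector affording $\lambda$, and since $\mathbf{v}$ has entries in $\{-1,0,1\}$ by assumption, $G$ is trivalent.

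I expect the main subtlety — rather than a genuine obstacle — to be the careful bookkeeping of the neighbor sum at $\pm 1$ vertices, where one must argue that the no-equal-link assumption is exactly what guarantees $\sum_{k\sim j} v_k = -\bar d_j v_j$, so that $d_j$ (total degree) and $\bar d_j$ (hard degree) enter additively. One should also confirm the two directions use the hypotheses symmetrically: the forward direction needs no-equal-links to suppress same-value neighbors, and the converse needs it so that condition (i) alone pins down the neighbor sum. A minor point worth stating explicitly is that $\mathbf{v} \neq 0$ (some vertex is nonzero), so that $\lambda$ is a legitimate eigenvalue rather than a vacuous one.
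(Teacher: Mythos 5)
Your proof is correct. Note, however, that the paper contains no proof of this statement to compare against: Theorem~\ref{thm:tri_graphs} is explicitly \emph{recalled} from the references \cite{AL21} and \cite{dam18} and stated without argument. Your row-by-row translation of $(L(G)\mathbf{v})_j = d_j v_j - \sum_{k\sim j} v_k = \lambda v_j$ is the standard (and essentially the only natural) way to verify it, and it is the argument given in the cited source \cite{dam18}: at a vertex with $v_j=\pm 1$ the no-equal-link hypothesis makes every nonzero neighbor carry the opposite sign, so the neighbor sum equals $-\bar d_j\, v_j$ and the eigenvalue relation collapses to $\lambda = d_j + \bar d_j$; at a soft vertex the relation forces $\sum_{k\sim j} v_k = 0$, i.e.\ condition (ii). Two of your side remarks are well judged: the theorem's statement is slightly loose about quantification (conditions (i)--(ii) only make sense once $\mathbf{v}$ is assumed $\{-1,0,1\}$-valued, which is the intended reading and the one you adopt), and the explicit observation that $\mathbf{v}\neq 0$ is needed for $\lambda$ to be a genuine eigenvalue in the converse is a detail the paper's statement glosses over. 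One further small point you could have added for completeness: at a soft vertex the no-equal-link hypothesis additionally rules out zero-valued neighbors (an edge between two soft nodes is an equal link), though, as your computation shows, this is not needed for (ii) since zero neighbors contribute nothing to the sum.
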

The path $P_2$ has a bivalent eigenvector $[-1,1]^{T}$ associated with $\lambda = 2.$ Notice that $\cup_{i=1}^{k} P_2$ still has a bivalent eigenvector $\mathbf{v} = [-1,1,\ldots,-1,1]^{T}$ associated with $\lambda=2$. Connecting equal nodes related to $\mathbf{v}$ generates a graph that has the same eigenvector $\mathbf{v} = [-1,1,\ldots,-1,1]^{T}$ according to Theorem \ref{L2S}.
A similar analysis holds if the initial graph $G$ is a star graph $S_{2k+1}$. In this case, we have $\lambda=1$ associated with 
$\mathbf{v} = [0,-1,1,\ldots,-1,1]^{T},$ where the soft node is 
the dominant vertex. To conclude, using theorem \ref{L2S} we can 
obtain arbitrarily many graphs with eigenvalue
$\lambda=2$ by connecting a number of $P_2$ or graphs with eigenvalue
$\lambda=1$ by connecting star graphs $S_{2k+1}$.

We have the following new result which will be useful in the
following proofs. 
\begin{theorem}[\textbf{Graph with a leaf}]
\label{lgraph}
Let $G$ be a connected bivalent or trivalent graph without equal links.
If $G$ has a leaf, then it is a chain $P_2$ ($[1,-1]^T, \lambda=2$)
or a chain star $S_{2k+1}$ ($ [0,-1,1,\ldots,-1,1]^{T}, \lambda=1)$ with a soft center.
\end{theorem}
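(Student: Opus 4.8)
The plan is to exploit the eigenvalue equation at a leaf together with the trivalent characterization in Theorem~\ref{thm:tri_graphs}, observing that a bivalent eigenvector is simply a trivalent one with no zero entries, so conditions (i)--(ii) apply in both the bivalent and trivalent cases. Let $\ell$ be a leaf with unique neighbor $u$. Writing the $\ell$-th row of $L(G)\mathbf{v}=\lambda\mathbf{v}$ gives $v_\ell - v_u = \lambda v_\ell$, that is, $(1-\lambda)v_\ell = v_u$. First I would rule out $v_\ell = 0$: a zero leaf would force $v_u = 0$, making $e_{\ell u}$ an equal link, contrary to hypothesis. Hence $v_\ell \in \{-1,1\}$, and condition (i) at $\ell$ reads $\lambda = d_\ell + \bar d_\ell = 1 + \bar d_\ell$ with $\bar d_\ell \in \{0,1\}$, so only $\lambda = 2$ (when $v_u \neq 0$) or $\lambda = 1$ (when $v_u = 0$) can occur.

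In the case $\lambda = 2$ the neighbor $u$ is valued $\pm 1$. Applying (i) at $u$ gives $d_u + \bar d_u = 2$; since $\ell$ is a nonzero neighbor of $u$ we have $\bar d_u \geq 1$, and as $d_u \geq \bar d_u$ this forces $d_u = \bar d_u = 1$. Thus $u$ is also a leaf, connectivity leaves $V = \{\ell, u\}$, and $G = P_2$ with the stated eigenvector.

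In the case $\lambda = 1$ we have $v_u = 0$. Here I would apply (i) to \emph{every} vertex $w$ valued $\pm 1$: the equation $d_w + \bar d_w = 1$ forces $d_w = 1$ and $\bar d_w = 0$, so each $\pm 1$ vertex is a leaf whose sole neighbor is soft. Moreover, since $G$ has no equal links, no two soft (zero) vertices can be adjacent, so the soft vertices form an independent set. Consequently every edge of $G$ joins a soft vertex to a $\pm 1$ leaf, so each soft vertex together with the leaves attached to it constitutes a connected component of $G$. As $G$ is connected it has exactly one soft vertex $u$, and by condition (ii) $u$ is adjacent to equally many $+1$ and $-1$ leaves; hence $G$ is the star $S_{2k+1}$ with soft center and the claimed eigenvector.

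The main obstacle is the $\lambda = 1$ analysis: the delicate step is combining condition (i) (forcing all $\pm 1$ vertices to be leaves) with the no-equal-links hypothesis (forcing soft vertices to be mutually nonadjacent) to conclude that connectivity admits only a single soft center. Once the star structure is pinned down, the balance of $+1$'s and $-1$'s follows immediately from condition (ii).
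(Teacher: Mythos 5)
Your proof is correct and takes essentially the same route as the paper: apply the condition $\lambda = d_j + \bar d_j$ of Theorem~\ref{thm:tri_graphs} at the leaf and case-split on whether its neighbor is valued $\mp 1$ (forcing $\lambda = 2$ and $P_2$) or $0$ (forcing $\lambda = 1$ and the star $S_{2k+1}$). You are in fact slightly more thorough than the paper in the $\lambda = 1$ case---explicitly ruling out a zero-valued leaf via the no-equal-links hypothesis and using the independence of soft vertices together with connectivity to conclude there is a single soft center---but this is a fuller write-up of the same argument, not a different one.
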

\begin{proof}
Consider a leaf $i$, it has value $-1$ or $+1.$ {Assume without loss of generality that $i$ has value 1.} So the adjacent vertex $j$ can only have values
$-1,0$. \\
If $j$ has value -1, then $d_i + \bar d_i = 2= \lambda$. For $j$ we also
have $d_j + \bar d_j = 2$ so that $j$ cannot be connected to another vertex than $i$. \\
If the adjacent vertex $j$ has value 0, then $d_i + \bar d_i = 1= \lambda$ and $j$
needs to have as many neighbors 1 and -1 and they all must have degree 1.
This corresponds to $j$ being the center of a star $S_{2k+1}$.
\end{proof}

\section{Trees and unicyclic graphs} 

In this section, we obtain a characterization of all bivalent and trivalent trees and unicyclic graphs. 

Next result is an immediate consequence of Theorem \ref{lgraph} for trees.

\begin{theorem}[\textbf{Bivalent or trivalent tree}]
\label{trivalenttree}
Bivalent and trivalent trees are obtained respectively 
by connecting chains $P_2$ ($[1,-1]^T, \lambda=2$) or 
star $S_{2k+1}$ ($ [0,-1,1,\ldots,-1,1]^{T}, \lambda=1)$ with equal links. \\ 
More precisely, bivalent trees are obtained by connecting $p \ge 1$ chains $P_2$ with $p-1$ equal links without creating a cycle. 
Trivalent trees are obtained by connecting $p \ge 1$ stars with a soft center with $p-1$ equal links without creating a cycle.
\end{theorem}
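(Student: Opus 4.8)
The plan is to prove both implications using the two directions of the edge principle (Theorem \ref{L2S}), with Theorem \ref{lgraph} supplying the local classification of the pieces. For the constructive direction, I would begin with the disjoint union of $p$ copies of $P_2$ (resp.\ $p$ stars $S_{2k+1}$ with soft centers). On this disconnected graph the concatenation of the individual eigenvectors is a bivalent (resp.\ trivalent) eigenvector for $\lambda=2$ (resp.\ $\lambda=1$), since the Laplacian is block-diagonal. Adding the $p-1$ prescribed equal links one at a time and invoking the ``adding'' case of Theorem \ref{L2S} at each step (each new edge joins two still-equal-valued endpoints, hence is genuinely an equal link) shows that the same vector remains an eigenvector for the same $\lambda$. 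An edge count, $p+(p-1)=2p-1$ in the $P_2$ case and analogously for stars, together with connectedness and the hypothesis that no cycle is created, shows the result is a tree on the full vertex set; hence it is a bivalent (resp.\ trivalent) tree.

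For the converse, let $T$ be a bivalent (resp.\ trivalent) tree with eigenvector $\mathbf{v}$ and eigenvalue $\lambda$. I would delete the equal links of $\mathbf{v}$ one by one; by the ``deleting'' case of Theorem \ref{L2S} each deletion preserves $\mathbf{v}$ and $\lambda$, and since $T$ is acyclic the result is a spanning forest $F$ containing no equal link. Because $L(F)$ is block-diagonal, the restriction of $\mathbf{v}$ to each connected component $C$ is either zero or an eigenvector of $L(C)$ for $\lambda$, and $C$ itself has no equal link. Every component with at least two vertices is a tree and therefore has a leaf, so Theorem \ref{lgraph} forces it to be a $P_2$ in the bivalent case and a star $S_{2k+1}$ with soft center in the trivalent case. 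Reassembling $T$ by restoring the deleted equal links reconnects these components into a tree; since $|E(T)|=n-1$ and $F$ is a forest with $p$ components we have exactly $(n-1)-(n-p)=p-1$ restored edges, none of which creates a cycle. This is precisely the asserted structure.

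The step I expect to be the main obstacle is the treatment of single-vertex components of $F$, to which Theorem \ref{lgraph} does not apply (an isolated vertex has no leaf). In the bivalent case this is dispatched easily: an isolated vertex $u$ of $F$ has all its $T$-neighbours equal to $v_u$, so the eigen-equation at $u$ reads $\lambda v_u = 0$; as $v_u=\pm1$ this forces $\lambda=0$, the trivial constant eigenvector, which we exclude, so no such components occur and all pieces are $P_2$'s. In the trivalent case, by contrast, an isolated vertex of $F$ can be a genuine soft node all of whose neighbours are soft; such a vertex is an extension soft node in the sense of Theorem \ref{articulation}, attached to the soft centers of the stars. Accounting for these junction vertices so that the final tally still reads as $p$ stars joined by $p-1$ equal links is the delicate bookkeeping the argument must carry out, and it is the point where the trivalent statement requires more care than its bivalent counterpart.
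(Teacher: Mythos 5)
Your proposal is correct and takes the route the paper intends but does not write out: the paper's entire proof is the single sentence ``the proof is obvious because a tree has at least a leaf,'' so your argument is the honest expansion of it --- block-diagonality of the Laplacian on the disjoint union, the two directions of the edge principle (Theorem \ref{L2S}), the forest edge count $(n-1)-(n-p)=p-1$, and Theorem \ref{lgraph} applied to each component with a leaf. Two small points you leave implicit are easily filled: a component of $F$ with at least two vertices cannot carry the zero restriction (two adjacent zeros would be an equal link, all of which were removed), so Theorem \ref{lgraph} does apply to it; and since $\lambda$ is one number for the whole tree, $P_2$ components ($\lambda=2$) and star components ($\lambda=1$) cannot coexist, which separates the bivalent and trivalent conclusions cleanly. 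Your eigen-equation argument $\lambda v_u = d_u v_u - \sum_{w\sim u} v_w = 0$ disposing of isolated $\pm 1$ vertices is exactly right and is not in the paper at all.

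The obstacle you flag in the trivalent case is genuine, and it is a defect of the paper's statement rather than of your argument. A pendant soft vertex attached to a star center yields a trivalent tree that the statement, read literally, does not produce: take $S_3$ with center $c$ and leaves valued $+1,-1$, and attach a new vertex $u$ with $v_u=0$ to $c$; then $\mathbf{v}=(0,1,-1,0)^T$ satisfies $L\mathbf{v}=\mathbf{v}$, so this $4$-vertex tree is trivalent with $\lambda=1$, yet $4$ is not a sum of odd numbers $\geq 3$, so the tree is not $p$ stars $S_{2k+1}$, $k\geq 1$, joined by $p-1$ equal links. The bookkeeping you call delicate closes only if one either counts each isolated zero of $F$ as a degenerate one-vertex star (every $T$-edge at such a vertex is a $0$--$0$ equal link into a star center or another isolated zero, so with $p$ the total number of components of $F$ the tally of $p-1$ equal links survives; and if $\lambda=2$ such zeros are impossible, since a zero could attach only to zeros while the $P_2$ components contain none, contradicting connectedness), or appends the clause ``extending with soft nodes'' in the sense of Theorem \ref{articulation} --- a clause the paper itself includes in the unicyclic Theorem \ref{bitrivalent} but omits here. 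With that amendment your proof is complete and strictly more careful than the paper's.
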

The proof is obvious because a tree has at least a leaf. Theorem \ref{trivalenttree} implies that bivalent trees have a perfect matching. 
\begin{figure}[h]
\centering
\includegraphics[width=10cm, height=4cm]{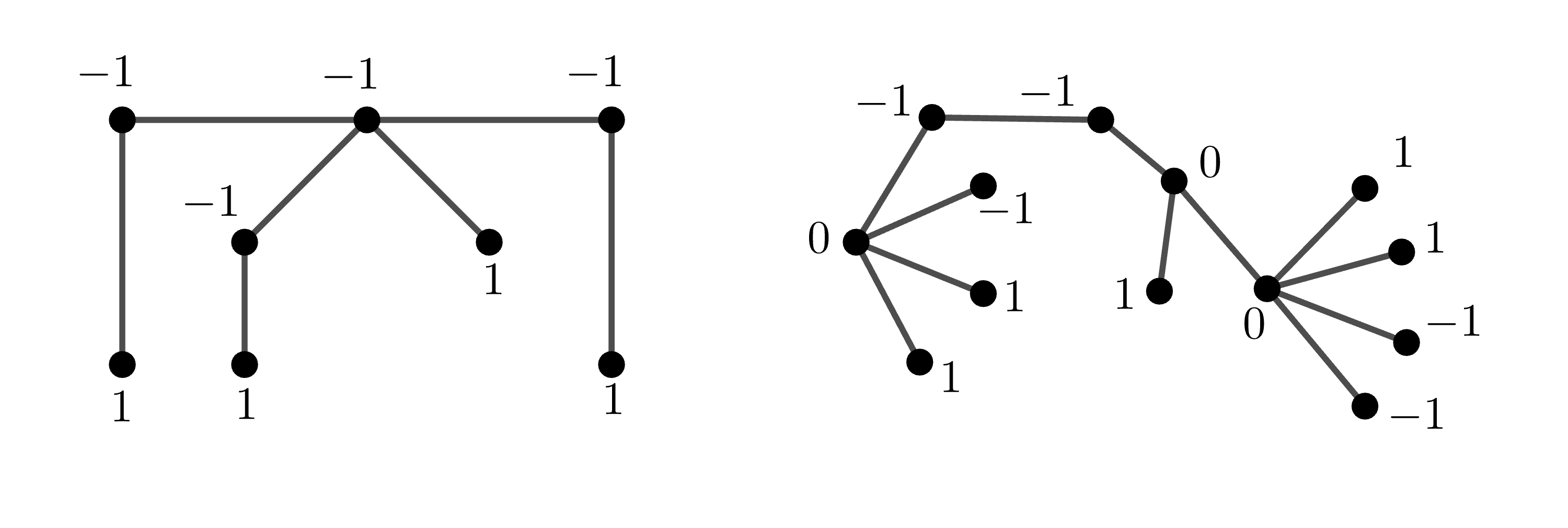}
\caption{Examples of bivalent and trivalent trees with eigenvalues 
$\lambda=2$ (left) and $\lambda=1$ (right).} 
\label{fig:triuni}
\end{figure}

For unicyclic graphs, we then have the following.
\begin{theorem}[\textbf{Bivalent or trivalent unicyclic graphs without equal links}]
\label{trivalentuni}
Bivalent or trivalent unicyclic graphs without equal links are the cycles 
\begin{itemize}
\item Cycle $4k$: eigenvalue 2 associated with    $[1,0,-1,0,1,0,-1,0,\dots,1,0,-1,0]^T;$ 
 
\item Cycle $3k$: eigenvalue 3 associated with $[1,0,-1,\dots,1,0,-1]^T;$
\item Cycle $2k$: eigenvalue 4 associated with $[1,-1,1,-1,\dots,1,-1]^T.$
\end{itemize}
\end{theorem}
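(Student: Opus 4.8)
The plan is to reduce the problem to cycles and then classify the admissible $\{-1,0,1\}$-eigenvectors directly from the degree-plus-hard-degree condition of Theorem \ref{thm:tri_graphs}.

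First I would argue that the graph must be a cycle. A bivalent eigenvector is in particular trivalent, so it suffices to treat the trivalent case. If $G$ had a leaf, then by Theorem \ref{lgraph} it would be $P_2$ or a star $S_{2k+1}$; both are trees (no cycle), contradicting that $G$ is unicyclic. Hence $G$ has no leaf, so every vertex has degree at least $2$. Since a connected unicyclic graph on $n$ vertices has exactly $n$ edges, $\sum_j d_j = 2n$, forcing $d_j = 2$ for all $j$; a connected $2$-regular graph is the cycle $C_n$.

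Next I would extract the eigenvalue constraint. On $C_n$ every vertex has $d_j = 2$. The hypothesis ``without equal links'' means no two adjacent vertices share a value; in particular no two adjacent zeros occur, so each soft vertex has both neighbors in $\{-1,+1\}$, and Theorem \ref{thm:tri_graphs}(ii) then forces exactly one $+1$ and one $-1$ neighbor. For each vertex valued $\pm 1$, Theorem \ref{thm:tri_graphs}(i) gives $\lambda = d_j + \bar d_j = 2 + \bar d_j$ with $\bar d_j \in \{0,1,2\}$; since $\lambda$ is common to all such vertices, $\bar d_j$ is constant and $\lambda \in \{2,3,4\}$. At least one $\pm 1$ vertex exists, as the eigenvector is nonzero.

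I would then trace the forced local pattern around the cycle in each case. For $\lambda = 4$ every $\pm 1$ vertex has both neighbors nonzero, so no zero can occur (a zero would give a $\pm 1$ neighbor with $\bar d < 2$); the values then alternate $+1,-1$, which closes up iff $n = 2k$. For $\lambda = 3$ every $\pm 1$ vertex has exactly one nonzero neighbor, which by the no-equal-links rule must carry the opposite sign, while each zero sits between a $+1$ and a $-1$; starting from a $+1$ this propagates uniquely as the period-$3$ block $1,0,-1$, closing up iff $n = 3k$. For $\lambda = 2$ every $\pm 1$ vertex has both neighbors zero while each zero lies between a $+1$ and a $-1$, forcing the period-$4$ block $1,0,-1,0$, closing up iff $n = 4k$. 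Finally I would check sufficiency by substituting each displayed vector into $L\mathbf{v} = \lambda\mathbf{v}$ vertex by vertex, confirming the eigenvalues $4$, $3$, $2$ respectively. The main obstacle is the bookkeeping in the tracing step: one must verify that the local rules (constant hard degree at $\pm 1$ vertices, balanced neighbors at zeros, distinct adjacent values) leave no freedom, and that the resulting period divides $n$ so the pattern is consistent all the way around the cycle.
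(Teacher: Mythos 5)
Your proposal is correct and follows essentially the same route as the paper: rule out leaves via Theorem \ref{lgraph} to reduce to a cycle, then use Theorem \ref{thm:tri_graphs} to pin $\lambda = 2 + \bar d_j \in \{2,3,4\}$ and classify each case. The only (harmless) variations are that you trace the forced periodic patterns $1,0,-1,0$ and $1,0,-1$ around the cycle where the paper instead counts soft nodes per $\pm 1$ vertex, and you absorb the bivalent $\lambda=4$ case into the same hard-degree analysis (legitimately, since a $\{-1,1\}$-vector is a zero-free $\{-1,0,1\}$-vector) where the paper invokes the regular-bipartite characterization of Theorem \ref{carbi}.
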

\begin{proof}
A consequence of the theorem on a graph with a leaf is that the unicyclic graph cannot
have a leaf because otherwise it would not have a cycle. Then the graph is reduced to a cycle, and $d_{i}=2$ to every vertex $i.$  It is well-known that $\lambda \leq 4$ for cycles.

Let $k$ be the number of nodes valued $1$, which is equal to the number of nodes valued $-1$. The proof is based on Theorem \ref{thm:tri_graphs}. 

We first suppose that $G$ is a trivalent cycle without equal links. Since there is at least one non-soft vertex $v$ of degree 2, from Theorem \ref{thm:tri_graphs}, we have $\lambda = d_v + \overline{d}_{v} \geq 2.$ Then, $2 \leq \lambda \leq 4.$

If $\lambda = 2$, then $\bar{d}_{j}=0$ for every non-soft node $j$. From Theorem \ref{thm:tri_graphs} (ii), the number of nodes valued -1 is also equal to $k.$ It implies that the number of soft nodes equals $2k$ since each node valued $1$ or $-1$ is linked to two soft nodes, and the cycle is $C_{4k}$. 

If $\lambda=3,$ we have $\bar{d}_{j} = 1$ for every non-soft node $j$. It implies that the number of soft nodes is equal to $k$ since each non-soft node is linked to exactly one soft node, and then the cycle is $C_{3k}$.  

If $\lambda=4,$ we have $\bar{d}_{j} = 2$ for every non-soft node $j$, which implies that there are no soft nodes since every vertex has degree 2. So, there is no trivalent graph with this eigenvalue.

Now, suppose that $G$ is a bivalent cycle. From Theorem \ref{carbi}, $G$ is regular bipartite with parts of size $k.$ It implies that $G$ is the cycle $2k,$ and the eigenvalue $\lambda=4$ is associated with eigenvector $[1,-1,\ldots,1,-1]^{T}.$
The proof is complete.
\end{proof}

To conclude, using theorem \ref{L2S} we have the following.
\begin{theorem}[\textbf{Bivalent or trivalent unicyclic graphs}] 
\label{bitrivalent} ~ \\
(i) Bivalent unicyclic graphs are composed of chains $P_2$ connected by equal links that form only one cycle (eigenvalue 2)
or an even cycle (eigenvalue 4). \\ \\
(ii) Trivalent unicyclic graphs are obtained from the following graphs
\begin{itemize}
\item stars $S_{2k+1}$ with 0 in the center and an equal number $k$ of vertices $+1$ and $-1$, with eigenvalue 1;
\item cycle $4k$, with eigenvalue 2; 
\item cycle $3k$, with eigenvalue 3. 
\end{itemize}
and adding equal links {or extending with soft nodes}, not forming a new cycle.
\end{theorem}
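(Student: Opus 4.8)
The plan is to build up the final characterization from the already-established structural results about cycles without equal links (Theorem \ref{trivalentuni}) and about stars with a soft center (from Theorem \ref{lgraph}), combining them with the two extension operations provided by Theorems \ref{L2S} (edge principle) and \ref{articulation} (extension of soft nodes). The key observation is that any unicyclic bivalent or trivalent graph $G$ carries an eigenvector $\mathbf{v}$; deleting all equal links of $G$ relative to $\mathbf{v}$ yields, by the edge principle, a graph $G'$ on the same vertex set with the same eigenvector and same eigenvalue, but now free of equal links. Removing edges can only decrease the cyclomatic number, so $G'$ is a disjoint union of trees and at most one unicyclic component (the single cycle of $G$ survives only if no equal link lay on it). I would organize the proof around which case occurs.

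First I would treat the bivalent case (i). Here the eigenvector takes only values $\pm 1$, so there are no soft nodes and hence no soft-node extensions; every added structure must come through equal links via Theorem \ref{L2S}. Applying the edge-principle decomposition, $G'$ has no equal links and is bivalent, so each of its connected components is either a $P_2$ with $\lambda = 2$ or an even cycle with $\lambda = 4$, by Theorems \ref{trivalenttree} and \ref{trivalentuni}. Since all components share the single eigenvalue $\lambda$ of $G$, either every component is a $P_2$ (forcing $\lambda=2$) or the unique cyclic component is an even cycle (forcing $\lambda=4$, in which case the acyclic pieces, being bivalent trees with $\lambda=4$, cannot exist, so $G'$ is exactly the even cycle). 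Re-adding the equal links to reconstruct $G$ must produce exactly one cycle; this yields precisely the two families described in (i).

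Next I would handle the trivalent case (ii) analogously, but now both operations are available: equal links (Theorem \ref{L2S}) and soft-node extensions (Theorem \ref{articulation}). Decomposing $G$ by first removing equal links and then, in reverse, accounting for soft-node attachments, the ``core'' structure without equal links must be a trivalent graph; by Theorem \ref{lgraph} its acyclic components are stars $S_{2k+1}$ with soft center ($\lambda=1$), and by Theorem \ref{trivalentuni} its unique cyclic component (if present) is a $C_{4k}$ ($\lambda=2$) or $C_{3k}$ ($\lambda=3$). The common-eigenvalue constraint again forces a single family per value of $\lambda$: $\lambda=1$ gives the stars, $\lambda=2$ gives cycle $4k$, and $\lambda=3$ gives cycle $3k$, each then decorated by equal links or soft-node extensions that do not create a second cycle.

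The main obstacle I anticipate is the bookkeeping needed to show that the two extension operations suffice to recover every such $G$ and, conversely, that they never leave the stated families — in particular ensuring that re-adding equal links and soft nodes preserves the unicyclic condition and does not change $\lambda$. The forward direction (every unicyclic bivalent/trivalent $G$ arises this way) rests on the edge-principle decomposition being reversible, which is clean; the subtle point is the converse closure argument, confirming that the operations applied to a base cycle or base star cannot inadvertently introduce a second independent cycle while remaining an eigenvector. This is precisely the content isolated by the phrase ``not forming a new cycle'' in the statement, and I would verify it by tracking the cyclomatic number through each edge addition, noting that an equal link added to a connected piece either stays within a tree (keeping it acyclic) or closes the unique allowed cycle, while soft-node extensions (Theorem \ref{articulation}) attach trees and thus never raise the cyclomatic number.
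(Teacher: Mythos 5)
Your proposal follows essentially the same route as the paper's proof: delete the equal links, observe that at most one of the resulting components contains the (unique) cycle, classify the cycle component via Theorem \ref{trivalentuni} and the tree components via Theorem \ref{trivalenttree}, and reassemble with equal links and soft-node extensions. Your additional bookkeeping (the common-eigenvalue constraint across components and the cyclomatic-number check when re-adding links) only makes explicit what the paper leaves implicit, so the argument is correct and matches the paper's approach.
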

\begin{proof}
To prove these two results, consider a bivalent or trivalent unicyclic graph and take out
equal links. There is at most one connected component with a cycle and
all other components are trees. 
All these components are bivalent or trivalent. 
The component with a cycle, as it has no equal links, 
follows the cases of Theorem \ref{trivalentuni}. \\
The bivalent or trivalent trees are described in Theorem \ref{trivalenttree}.
\end{proof}

Figure \ref{fig:triuni} shows a unicyclic trivalent graph obtained by 
extending with soft nodes in red color. 
\begin{figure}[h]
\centering
\includegraphics[width=6cm, height=4cm]{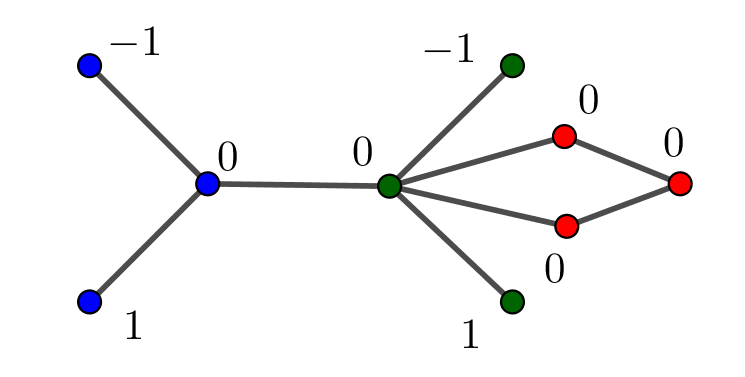}
\caption{A trivalent unicyclic graph with eigenvalue 1.} 
\label{fig:triuni}
\end{figure}
\begin{figure}[h]
\centering
\includegraphics[width=7cm, height=3cm]{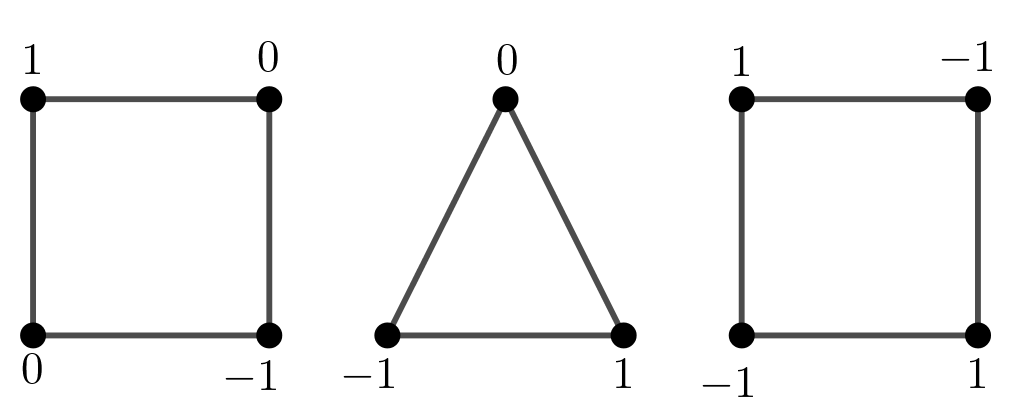}
\caption{The trivalent (first two) and bivalent elementary cycles, with eigenvalues 2,3 and 4 from left to right.} 
\label{fig:l234}
\end{figure}

\section{Bicyclic graphs}

A connected graph $G$ of order $n$ is called a bicyclic graph if $G$ 
has $n+1$ edges. Because of Theorem \ref{lgraph} bivalent or trivalent bicyclic
graphs cannot have a leaf.

It is known \cite{He2007} that there are three types of bicyclic graphs 
containing no leaves. These are shown in Figure \ref{fig1} and
noted $B_1, B_2$ and $B_3$.
Let \textcolor{black}{$B_{1}(p,q)$} be the set of bicyclic graphs 
obtained from two vertex-disjoint cycles $C_p$ and $C_q$ by identifying 
a vertex $u$ of $C_p$ and a vertex $v$ of $C_q$ such that 
$n = p+q-1.$ \\
\textcolor{black}{Let $B_{2}(p,q,r)$ be the bicyclic graph obtained from two 
vertex-disjoint cycles $C_p$ and $C_q$, by joining vertices $v_1$ of 
$C_p$ and $u_r$ of $C_q$ by a path of length $r-1$ such that  
$n = p+q+r$.} \\
Let $B_{3}(p,q,r)$ be the bicyclic graph obtained by taking three chains 
containing respectively $p, q$ and $r$ vertices  and identifying
their two extremities.\\
Notice that $B_1(p,q)$ is a special case of $B_2(p,q,r)$ for $r=0$.
\begin{figure}[!h]
    \centering
\includegraphics[height=8cm]{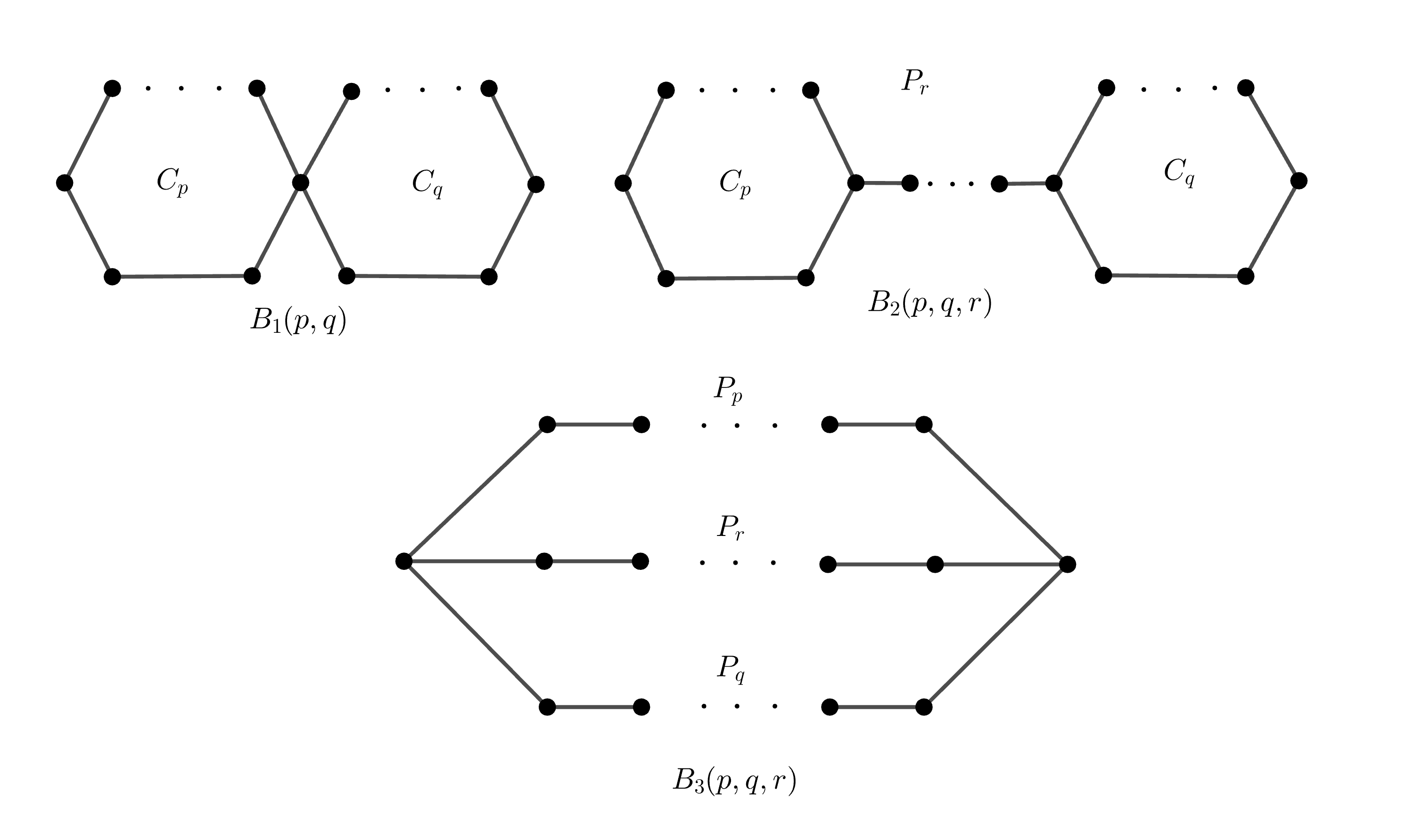}
    \caption{Families $B_1(p,q)$, $B_2(p,q,r)$ and $B_3(p,q,r)$ of bicyclic graphs with no pendant vertices}
    \label{fig1}
\end{figure}

\begin{lema}\label{bivalentbicyclicnolinks}[\textbf{Bivalent 
bicyclic graphs without equal links}] If $G$ is a bicyclic graph without 
equal links, it cannot be bivalent.
\end{lema}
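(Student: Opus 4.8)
The plan is to show that the hypotheses force $G$ to be a regular bipartite graph and then to reach a contradiction from a one-line edge count, since no regular bipartite graph can be bicyclic.

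First I would unpack ``bivalent without equal links''. A bivalent eigenvector $\mathbf{v}$ has every entry in $\{-1,+1\}$, so $G$ has no soft nodes for $\mathbf{v}$. The absence of equal links means that no edge joins two vertices of equal value, so every edge connects a $+1$ vertex to a $-1$ vertex and $G$ is bipartite, with the bipartition given by the sign of $\mathbf{v}$. To obtain regularity I would read off the eigenvalue equation at an arbitrary vertex $j$: since every neighbor $i$ of $j$ satisfies $v_i = -v_j$, we get $(L\mathbf{v})_j = d_j v_j - \sum_{i \sim j} v_i = 2 d_j v_j$, whence $\lambda v_j = 2 d_j v_j$ and $d_j = \lambda/2$ for all $j$. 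Thus $G$ is $d$-regular with $d = \lambda/2$. Equivalently, this is precisely Theorem \ref{carbi} in the case where no extension edges are present.

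Next I would count edges. A $d$-regular graph on $n$ vertices has $|E| = nd/2$, whereas a bicyclic graph satisfies $|E| = n+1$ by definition. If $d \leq 2$, then $|E| = nd/2 \leq n < n+1$; and if $d \geq 3$, then $|E| = nd/2 \geq 3n/2 > n+1$ for every $n \geq 3$, while any connected simple graph with $n+1$ edges already has $n \geq 4 \geq 3$. In either case $|E| \neq n+1$, contradicting that $G$ is bicyclic and completing the proof.

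I do not expect a serious obstacle; the only point requiring care is the opening step, where I must be sure that ``no equal links'' together with the all-$\pm 1$ structure forces both bipartiteness and regularity, not merely bipartiteness. The eigenvalue-equation computation settles regularity cleanly, which is what makes the edge count decisive and lets the argument avoid any case analysis over the three families $B_1$, $B_2$, $B_3$.
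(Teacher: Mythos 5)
Your proof is correct, but it takes a genuinely different route from the paper's. The paper invokes its leaf theorem (Theorem \ref{lgraph}) to conclude that a bivalent bicyclic graph without equal links has no pendant vertex, hence lies in one of the three leafless families $B_1(p,q)$, $B_2(p,q,r)$, $B_3(p,q,r)$; it then observes that each of these contains a vertex of degree $3$ alongside vertices of degree $2$, so none is regular, contradicting the characterization of bivalent graphs (Theorem \ref{carbi}). You bypass the structural classification entirely: bipartiteness comes from the no-equal-links hypothesis, regularity from the eigenvalue equation $\lambda v_j = 2 d_j v_j$ (valid since $v_j \neq 0$), and the contradiction from the edge count $nd/2 \neq n+1$ — which is airtight once one notes, as you do, that a simple bicyclic graph has $n \geq 4$. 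Your derivation of regularity is exactly the no-extension case of Theorem \ref{carbi}, as you say, so you are not even proving anything the paper lacks; but your conclusion via counting is more elementary and more general, needing neither the leaf theorem nor the classification of leafless bicyclic graphs from \cite{He2007}. In fact your argument anticipates the paper's own Theorem \ref{regbiparmcycle}: the cyclomatic-number formula $c = \frac{n}{2}(d-2)+1$ for regular bipartite graphs shows at once that $c=2$ is unattainable, a point the paper itself remarks on after that formula. What the paper's route buys is uniformity — the same leaf-theorem-plus-family-analysis template is reused for the trivalent bicyclic case (Theorem \ref{trivalentbicyclic}), where your eigenvalue-equation shortcut fails because soft nodes destroy regularity. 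One pedantic point worth making explicit in your opening step: if the bivalent eigenvector were constant (eigenvalue $0$), every edge would be an equal link, which is excluded since a bicyclic graph has $n+1 \geq 5$ edges; this guarantees that both sign classes of your bipartition are nonempty.
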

\begin{proof}
Let $G$ be a bicyclic graph. From Theorem \ref{lgraph}, $G$ has no 
leaf, so it should belong to $B_1(p,q), B_2(p,q,r)$ or $B_{3}(p,q,r).$ Notice that in every case it is not possible to obtain a vertex bipartition of $V(G)$ such that $G$ is a regular bipartite graph since there is at least one vertex $u \in V(G)$ such that $d_{u} = 3$ while there are some other vertices with degree 2. From Theorem \ref{carbi}, $G$ is not bivalent, and the proof is complete. \end{proof}


\textcolor{black}{\begin{theorem}[\textbf{Bivalent bicyclic graphs}]
\label{bivalentbicyclic}
Bivalent bicyclic graphs are
composed of either\\
(i) chains $P_2$ connected by equal links that form only two cycles
(eigenvalue $\lambda=2$) \\
(ii) one even cycle with an additional equal link 
(eigenvalue $\lambda=4$). \\
(iii) two even cycles connected by an equal link $\lambda=4$).
\end{theorem}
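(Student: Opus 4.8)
The plan is to mirror the decomposition argument used for unicyclic graphs in the proof of Theorem~\ref{bitrivalent}. Let $G$ be a bivalent bicyclic graph with bivalent eigenvector $\mathbf{v}\in\{-1,+1\}^{n}$ affording eigenvalue $\lambda$. First I would delete every equal link of $G$. By the edge principle (Theorem~\ref{L2S}), $\mathbf{v}$ remains an eigenvector of the resulting graph $G'$ affording the same eigenvalue $\lambda$. Since $\mathbf{v}$ has no zero entry, its restriction to each connected component of $G'$ is nonzero, and because the Laplacian of $G'$ is block-diagonal over the components, every component is itself a bivalent graph without equal links affording the \emph{same} eigenvalue $\lambda$. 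This uniformity of $\lambda$ across components is what will force the clean trichotomy.

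Next I would identify the admissible components. Writing $\mu(H)$ for the cyclomatic number of $H$ (its number of independent cycles), deleting an edge never increases $\mu$, so $\mu(G')\le\mu(G)=2$ and every component has cyclomatic number at most $2$. A component of cyclomatic number $2$ would be a bivalent bicyclic graph without equal links, which is impossible by Lemma~\ref{bivalentbicyclicnolinks}; hence each component is a tree or unicyclic. A bivalent tree without equal links has a leaf, so by Theorem~\ref{lgraph} it must be a single chain $P_2$ (eigenvalue $2$), while a bivalent unicyclic component without equal links is, by Theorem~\ref{trivalentuni}, an even cycle $C_{2k}$ (eigenvalue $4$). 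Because all components share the same $\lambda$, there are exactly two regimes: either $\lambda=2$ with every component a $P_2$, or $\lambda=4$ with every component an even cycle.

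Then I would reconstruct $G$ by restoring the deleted equal links while tracking $\mu$, which must return to $2$. Each $P_2$ contributes $\mu=0$ and each even cycle contributes $\mu=1$; restoring an equal link within a component raises $\mu$ by $1$, whereas restoring one between two components merges them and leaves $\mu$ unchanged. In the regime $\lambda=2$ we have $\mu(G')=0$, so the equal links must reconnect the $P_2$'s into a connected graph creating exactly two independent cycles, which is case (i). In the regime $\lambda=4$, $\mu(G')$ equals the number of even-cycle components, hence is $1$ or $2$. If there is a single even cycle, the restored links add exactly one chord joining two equal-valued vertices (a second chord would push $\mu$ beyond $2$), giving case (ii); if there are two even cycles, they must be joined by exactly one equal link acting as a bridge, a second such link again being forbidden by $\mu\le 2$, giving case (iii). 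Conversely, each listed construction is bivalent by the edge principle, since every restored link joins vertices of equal value.

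The hard part will be the exhaustiveness of the reconstruction step: one must argue that no other placement of equal links is admissible. Two observations close this. First, an equal link always has both endpoints among the existing $\pm1$ vertices, so restoring links never produces a leaf, consistent with Theorem~\ref{lgraph} forbidding leaves in a bivalent bicyclic graph. Second, the rigid budget $\mu=2$, together with the fact that the cycles already present in $G'$ consume all of the cyclomatic slack in the $\lambda=4$ case, leaves room for at most one additional chord or one bridge, and ruling out disconnected reconstructions (which would force $\mu\ge 3$) pins down (ii) and (iii) as the only possibilities. Maintaining tight cyclomatic bookkeeping and verifying that a connected $G$ is realizable in each case is the only place where genuine care is needed.
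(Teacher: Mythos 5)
Your proposal is correct and follows essentially the same route as the paper: remove the equal links, invoke Lemma~\ref{bivalentbicyclicnolinks} to exclude a bicyclic component, and classify the remaining components as chains $P_2$ with $\lambda=2$ (via Theorems~\ref{lgraph} and \ref{trivalenttree}) or even cycles with $\lambda=4$ (via Theorem~\ref{trivalentuni}). Your explicit cyclomatic-number bookkeeping for restoring the equal links simply makes rigorous the reconstruction into cases (i)--(iii) that the paper's proof leaves implicit.
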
}
\begin{proof}
Let $G$ be a bicyclic graph. Taking out equal links, we obtain $p$ connected
components. 
Each component is either a tree, a unicyclic graph (recall that a bicyclic
graph has necessarily an equal link) or a bicyclic graph 
(in this case $p=1$).   \\
If the component is a tree, by Theorem \ref{trivalenttree} it is necessarily a chain $P_2$
and $\lambda=2$. \\
If the component is a unicyclic graph, from Theorem \ref{trivalentuni}, it is an even cycle
and $\lambda=4$. 
\end{proof}

\begin{figure}[h]
\centering
\includegraphics[height=4cm, width=10cm]{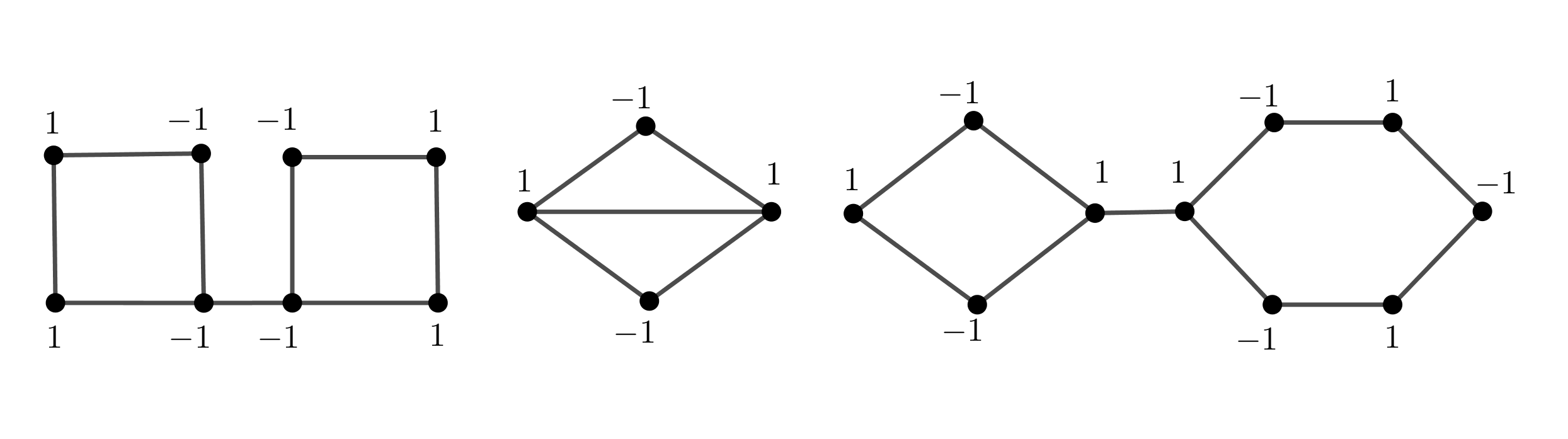}
\caption{Bivalent bicyclic graphs from left to right 
(i) chains $P_2$ connected by equal links, $\lambda=2$, 
(ii) even cycle with an additional equal link $\lambda=4$ and
(iii) two even cycles connected by an equal link $\lambda=4$.
}
\label{fig:bicyclic-bi}
\end{figure}

Next, we characterize trivalent bicyclic graphs without equal link.

\begin{theorem}[\textbf{Trivalent bicyclic graphs without equal links}]
\label{trivalentbicyclic}
Trivalent bicyclic graphs without equal links are:
\begin{itemize}
\item[(i) ] $B_{1}(p,q)$: \\
eigenvalue $\lambda=2$ with 
$(p,q) = (4k_1, 4k_2)$ or  $(4k_1+2, 4k_2+2)$, for any integers $k_1, k_2 \geq 1.$ \\
eigenvalue $\lambda=3$ with $p=3k_1$ and $q=3k_2$ for any integers $k_1, k_2 \geq 1;$ 
\item[(ii) ] $B_{3}(p,q,r):$ \\
eigenvalue $\lambda=3$ with 
$(p,q,r) = (3k_1, 3k_2, 3k_3)$  for any integers $k_1, k_2, k_3 \geq 1.$ \\
eigenvalue $\lambda=4$ with \textcolor{black}{$(p,q,r) = (3, 2, 3)$} (diamond graph)
\end{itemize}
\end{theorem}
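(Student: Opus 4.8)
The plan is to run the whole argument through the local characterization of Theorem~\ref{thm:tri_graphs}. By Theorem~\ref{lgraph} a trivalent bicyclic graph without equal links has no leaf, so $G$ is one of $B_1(p,q)$, $B_2(p,q,r)$, $B_3(p,q,r)$; in each case the minimum degree is $2$, with only the shared/junction vertices having degree $3$ (in $B_2,B_3$) or $4$ (in $B_1$). First I would pin down the eigenvalue. An eigenvector supported only on the few high-degree vertices cannot satisfy the Laplacian equation at an adjacent degree-$2$ vertex, so there is a non-soft vertex $j$ of degree $2$; Theorem~\ref{thm:tri_graphs}(i) then gives $\lambda=d_j+\bar{d}_j=2+\bar{d}_j\in\{2,3,4\}$. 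The workhorse is the degree-$2$ recurrence $v_{i-1}+v_{i+1}=(2-\lambda)v_i$, whose only $\{-1,0,1\}$-solutions without equal links are the period-$4$ block $1,0,-1,0$ for $\lambda=2$, a cyclic rotation of $1,0,-1$ for $\lambda=3$, and the zero-free alternation $1,-1$ for $\lambda=4$ (a single interior zero in the $\lambda=4$ case propagates a $\pm2$ and is impossible). So along every maximal path of degree-$2$ vertices the pattern is forced up to phase and sign.

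Next I would dispose of $B_2$, showing it is never trivalent without equal links (hence its absence from the statement). The clean computation is the junction equation: at a degree-$3$ vertex $a$, the Laplacian equation reads $\sum_{u\sim a}v_u=(d_a-\lambda)v_a$. For $\lambda=3$ this says the three neighbours of $a$ sum to $0$; tracking the period-$3$ pattern around the cycle $C_p$ attached at $a$ and imposing the closing condition forces the neighbour of $a$ on the connecting path to equal $v_a$ when $p\equiv0\ (\mathrm{mod}\ 3)$, and forces $v_a=v_{p_1}$ or an out-of-range value otherwise — an equal link in every case. For $\lambda=2$ the degree-$3$ pole cannot be non-soft, so it is soft, but an odd number of neighbours cannot carry equal numbers of $+1$ and $-1$ (Theorem~\ref{thm:tri_graphs}(ii)) without a zero neighbour, and that zero sits next to the zero pole, again an equal link; and $\lambda=4$ is killed because the zero-free alternation on $C_p$ puts two nonzero neighbours at $a$, contradicting $\bar{d}_a=1$.

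For $B_1(p,q)$ the shared vertex $w$ has degree $4$, so $\lambda=4$ is impossible ($d_w+\bar{d}_w\ge4$ with a nonzero neighbour) and $w$ must be soft for $\lambda\in\{2,3\}$. For $\lambda=3$, closing the period-$3$ pattern on each cycle through the zero at $w$ forces $p=3k_1$, $q=3k_2$, and the balance condition at $w$ holds automatically (each cycle contributes one $+1$ and one $-1$), with no link between $k_1$ and $k_2$. For $\lambda=2$ the closing condition first forces each cycle even; then the sign with which the alternation returns to $w$ around $C_p$ depends on $p\bmod 4$, and Theorem~\ref{thm:tri_graphs}(ii) applied to the four neighbours of $w$ forces the two cycles to have the \emph{same} residue mod $4$ — yielding exactly $(4k_1,4k_2)$ or $(4k_1{+}2,4k_2{+}2)$ and ruling out mixed parities. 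This parity-matching at $w$ is the subtle point of the $B_1$ analysis.

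Finally $B_3(p,q,r)$, the theta graph with poles $a,b$. For $\lambda=2$ the same odd-degree/balance obstruction as in $B_2$ forces an equal link, so this case is empty. For $\lambda=3$ the neighbour-sum-zero condition at each pole together with period-$3$ closing on each chain forces every chain length $\equiv0\ (\mathrm{mod}\ 3)$ independently, giving $(3k_1,3k_2,3k_3)$; I would exhibit the explicit eigenvector (poles $\pm1$, interior zeros) to confirm sufficiency. The case $\lambda=4$ is where I expect the real difficulty. Each chain must carry the zero-free alternation (or, on a length-$3$ chain, a single interior zero, which occurs precisely when the two poles have opposite signs), and these patterns must be simultaneously consistent with fixed pole values $v_a,v_b$ at \emph{both} ends of all three chains, while keeping $\bar{d}_a=\bar{d}_b=1$ and avoiding any equal link. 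The diamond $(3,2,3)$ emerges when the length-$2$ chain (the direct edge $ab$) supplies the unique nonzero pole-neighbour and the two length-$3$ chains supply soft nodes. The hard part will be carrying out this reconciliation of the three chains at both poles \emph{exactly} — matching the parities and sign-returns so that the eigenvalue equation, the hard-degree condition, and the no-equal-link condition are all met — and I would treat determining the precise surviving parameter set as the crux of the proof, since the interplay between chain parity and pole signs is considerably more delicate here than in the $\lambda=2,3$ cases.
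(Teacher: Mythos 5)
You leave a genuine gap, and it sits exactly where you predicted the difficulty would be: the $B_3$, $\lambda=4$ case. Announcing that you ``would treat determining the precise surviving parameter set as the crux'' is not a proof of item (ii) --- that determination \emph{is} the content of the claim that only $(3,2,3)$ survives, so the argument is unfinished at the one point the theorem needs it. A secondary but real flaw is your opening step: the claim that there must exist a non-soft vertex of degree $2$ is false. In the diamond $B_3(3,2,3)$ and in $K_{2,3}=B_3(3,3,3)$ \emph{every} degree-$2$ vertex is soft, and the eigenvector supported on the two poles does satisfy the Laplacian equation at each adjacent degree-$2$ vertex, since $(2-\lambda)\cdot 0=1+(-1)=0$. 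In those all-soft configurations you must instead bound $\lambda$ through the poles ($\lambda=3+\bar d$ with $\bar d\le 1$ when the interiors vanish); the conclusion $\lambda\in\{2,3,4\}$ survives, but not by your stated reason.

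The instructive part is what happens if you actually carry out your $\lambda=4$ reconciliation. Both poles must be nonzero with $\bar d_a=\bar d_b=1$ (a soft pole of odd degree needs a zero neighbour, an equal link); on each chain, a nonzero interior vertex needs both neighbours nonzero while a zero interior vertex needs neighbours $\{+1,-1\}$, so every chain is either all-nonzero in its interior (alternating, contributing $1$ to $\bar d$ at \emph{each} pole) or a length-$3$ chain with a single interior zero (forcing opposite pole signs). The condition $\bar d_a=\bar d_b=1$ then forces exactly one nonzero chain --- either the direct edge or an alternating chain of even length --- and two length-$3$ zero chains. This yields $B_3(3,2k,3)$ for every $k\ge 1$, not just the diamond: for $B_3(3,4,3)$ take poles $1,-1$, middle chain interior $-1,1$, side interiors $0$; one checks $(Lv)_a=3\cdot 1-(0+0-1)=4$, $(Lv)_{x_1}=-2-(1+1)=-4$, and there are no equal links, so by Theorem~\ref{thm:tri_graphs} this graph is trivalent with $\lambda=4$. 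Note that the paper's own proof dismisses this case with an induction asserting that $\lambda=4$ forces all vertices of a cycle to be $\pm 1$ ``including the joint node''; that propagation breaks at a degree-$3$ pole, where a nonzero pole with $\bar d=1$ legitimately satisfies $\lambda=3+1=4$ and stops the spread of nonzero values, so the restriction to $(3,2,3)$ is not established there either. Apart from this, your route matches the paper's (leaf exclusion via Theorem~\ref{lgraph}, the three families, junction balance, pattern recurrences), and your explicit mod-$4$ parity matching at the $B_1$ junction --- both cycles $\equiv 0$ or both $\equiv 2 \pmod 4$ so the four neighbours of the soft junction balance --- is carried out more carefully than in the paper, whose proof only exhibits the pattern $(1,0,-1,0)$ without deriving the residue condition.
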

\begin{proof} 
From Theorem \ref{lgraph}, a bicyclic trivalent graph cannot have a leaf, then $G$ should 
belong to one of the following families: $B_1(p,q), B_{2}(p,q,r)$ or 
$B_{3}(p,q,r)$ for $p,q,r \geq 3$.

Consider a cycle of these graphs, it has at least three vertices and
two neighbors cannot have the same value so there is at least 
one vertex $v$ of degree 2. If at least one of the cycles is of length
4 or more, then there are at least two vertices of degree 2 that 
are neighbors and one of them is non-zero. 
Then we have $d_v + \bar d_v = \lambda \leq 4$. \\
In the particular case where both cycles are of length 3, we have the
diamond graph with $\lambda=4$, see Fig. \ref{fig:bicyclic_trivalent}. \\
For cycles of length greater than 3, we have for any non-zero vertex $v$ of degree 2 in a cycle
$$2 \leq d_v + \bar d_v = \lambda \leq 3 .$$
To show this, assume $x_v=1$ and $d_v + \bar d_v =4$, then the neighbors of $v$ all have
$x=-1$ and by induction, all nodes of the cycle have value 1 or -1, including the joint node $u$.
Node $u$ has three neighbors (cases $B_2$ or $B_3$) or four neighbors 
(case $B_1$) neighbors so that $d_u + \bar d_u > 4$.
This is impossible.

Consider a graph of the kind $B_1$ or $B_2$.\\
Assume $x_u \neq 0$. 
Then for graphs of the kind $B_1$ $d_u=4$ so 
$d_u + \bar d_u \ge 5$ which is impossible. \\
For graphs of kind $B_2$, $d_u + \bar d_u \ge 5$ and again it is impossible.
Then $x_u=0$. \\
For graph $B_2$ with $l>0$, consider the node $u$ of degree three, it's 
neighbor 
on the chain has to be of value 0 so that the sum of the values of the
neighbors of $u$ is zero. 
This is impossible because we assumed no equal link.
Therefore $B_2$ with $l>0$ is ruled out. \\
Assume $\lambda=2$. \\
Then for any node $v,~ x_v\neq 0$ in a cycle, $\bar d_v =0$ so its neighbors $w$ verify
$x_w=0$.
The cycle is then formed by repetitions of the pattern
$$ (1 , 0, -1, 0) . $$
Now assume $\lambda=3$, then for any node $v,~ x_v=1$ of a cycle, 
one of its neighbors has value 0 and the other verifies $x=-1$. Therefore
cycles are made of patterns
$$ (0, 1,-1) . $$

For graphs $B_3$, node v of degree 3 cannot have value 0 because then it 
should have as many neighbors of value 1 as neighbors of value -1.
Assume $x_v=1$. 
Now assume $d_v + \bar d_v = \lambda = 3 + \bar d_v$, the only possibility is
$\bar d_v=0$ and $v$ has only neighbors with value 0. Therefore $\lambda = 3 $.
Since the graph is trivalent, on one of the chains, vertices with value 0
can only have neighbors 1 and -1. To conclude, each chain is formed by
patterns
$$(1,0,-1). $$
including the two vertices $u$ and $v$ of degree 3. Therefore $x_v=1=-x_u$.\\
\end{proof}
Figure \ref{fig:bicyclic_trivalent} shows examples of trivalent 
bicyclic graphs without equal link.
\begin{figure}[!ht]
\centering
\includegraphics[height=10cm,angle=0]{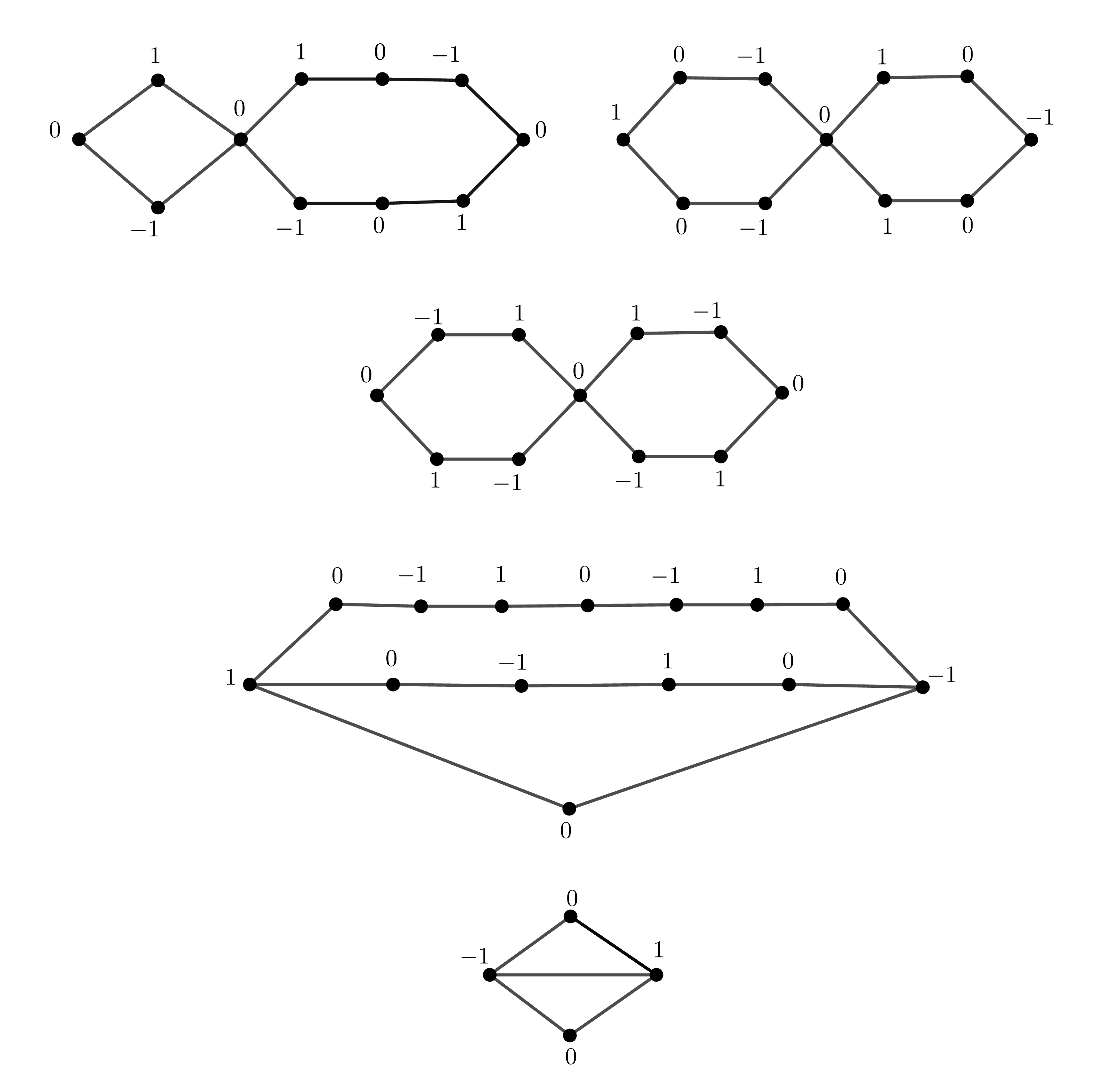}
\caption{ Bicyclic trivalent graphs without equal link:
1st row, $B_{1}(4,8)$ and $B_{1}(6,6)$ for $\lambda=2$ (case (i));
2nd row $B_{1}(6,6)$ for $\lambda=3$ (case (i)); 3rd row 
$B_{3}(9,6,3)$ for $\lambda=3$ (case (ii)) and 4th row 
diamond graph $B_{3}(3,2,3)$ for for $\lambda=4$ (case (ii)).
}
\label{fig:bicyclic_trivalent}
\end{figure}
\newpage
Note that we can add an arbitrary number of chains to $B_3$ connected
to the same vertices u and v and form multicyclic trivalent graphs without 
equal link.

Now, we are ready to state the characterization of trivalent 
bicyclic graphs, which is a consequence of 
Theorems \ref{bivalentbicyclic} and \ref{trivalentbicyclic}.

\begin{theorem}[\textbf{Trivalent bicyclic graphs}]
\label{trivalentbicyclicgeneral}
Let $k_1, k_2, k_3 \geq 1$ be integers.  Trivalent bicyclic graphs are obtained from the following graphs
\begin{itemize}
\item [(i)] stars $S_{2k+1}$ with 0 in the center and an equal number $k$ of vertices $+1$ and $-1$, with eigenvalue 1;

\item [(ii)] $B_{1}(3k_1, 3k_2)$ with eigenvalue 2;

\item [(iii)] $B_{1}(4k_1, 4k_2)$ or $B_{1}(4k_1+2, 4k_2+2)$ with eigenvalue $3;$

\item [(iv)] $B_{3}(3k_1, 3k_2, 3k_3)$ with eigenvalue $3,$
\end{itemize}
{and adding links between equal links or extending with soft nodes, not forming a new cycle.}
\end{theorem}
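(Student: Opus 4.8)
The plan is to mirror the proof strategy already established for the unicyclic case (Theorem~\ref{bitrivalent}) and the bivalent bicyclic case (Theorem~\ref{bivalentbicyclic}): decompose an arbitrary trivalent bicyclic graph by removing its equal links, analyze the resulting components using the no-equal-link classification, and then show that the list in the statement exhausts the possibilities after reassembly. Concretely, let $G$ be a trivalent bicyclic graph with eigenvector $\mathbf{v}$ affording eigenvalue $\lambda$. By Theorem~\ref{L2S}, deleting every equal link of $G$ produces a graph $G'$ on the same vertex set with the same eigenvector $\mathbf{v}$ and the same eigenvalue $\lambda$, and $G'$ has no equal links. Each connected component of $G'$ is then trivalent (the restriction of $\mathbf{v}$ remains an eigenvector), and each is a tree, a unicyclic graph, or a bicyclic graph; since $G$ has cyclomatic number $2$ and deleting equal links cannot destroy a cycle that was not an equal link, the components contain in total exactly two independent cycles.

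The core of the argument is the case analysis on how those two cycles are distributed among the components. First I would handle the case where both cycles lie in a single bicyclic component, which by Theorem~\ref{trivalentbicyclic} must be one of $B_1(4k_1,4k_2)$, $B_1(4k_1+2,4k_2+2)$, or $B_1(3k_1,3k_2)$ (with $\lambda=2,2,3$ respectively), $B_3(3k_1,3k_2,3k_3)$ (with $\lambda=3$), or the diamond $B_3(3,2,3)$ (with $\lambda=4$). The remaining components, being trees or unicyclic graphs carrying the same eigenvalue $\lambda$, are constrained by Theorems~\ref{trivalenttree} and \ref{trivalentuni}: for $\lambda=3$ these can be stars $S_{2k+1}$ (only for $\lambda=1$, hence ruled out here) or cycles $C_{3k}$, for $\lambda=2$ they can be chains $P_2$ or cycles $C_{4k}$, and so on. The key normalization is to recognize that across all components the eigenvalue $\lambda$ is shared, so only components whose no-equal-link classification permits that specific $\lambda$ can appear; this is what forces the clean list of generators. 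The diamond case with $\lambda=4$ must be discarded because no tree or unicyclic component supports $\lambda=4$ while remaining trivalent (Theorem~\ref{trivalentuni} shows $\lambda=4$ forces a bivalent even cycle with no soft nodes, which cannot be glued by equal links to a trivalent structure without creating contradictions), leaving the four families (i)--(iv) in the statement.

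Second I would treat the complementary case where the two cycles lie in two distinct components, each unicyclic, which by Theorem~\ref{trivalentuni} are cycles $C_{4k}$ ($\lambda=2$) or $C_{3k}$ ($\lambda=3$). Reconnecting them by a single equal link (to restore the bicyclic cyclomatic number without creating a third cycle) recovers precisely the graphs $B_1(3k_1,3k_2)$ for $\lambda=3$ and the analogous $\lambda=2$ configurations, consistent with items (ii)--(iv); any tree components attached reduce, via Theorem~\ref{trivalenttree}, to stars or chains that are absorbed by the phrase ``adding links between equal nodes or extending with soft nodes.'' Finally, the converse direction is routine: each listed generator is verified to be trivalent by exhibiting its eigenvector explicitly (the repeating patterns $(1,0,-1,0)$, $(0,1,-1)$, and $(1,0,-1)$ from Theorem~\ref{trivalentbicyclic}), and Theorems~\ref{L2S} and \ref{articulation} guarantee that adding equal links or extending with soft nodes preserves both the eigenvalue and the trivalent property.

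I expect the main obstacle to be the bookkeeping in the reassembly step: one must verify that attaching the tree components and the extra soft-node extensions never forces a value that would create a second eigenvalue constraint incompatible with $\lambda$, and in particular that the $\lambda=4$ diamond genuinely cannot be extended into a larger trivalent bicyclic graph. Establishing that the equal-link reconnection does not inadvertently create a \emph{third} independent cycle (which would violate the bicyclic hypothesis) also requires care, since the statement explicitly restricts to extensions ``not forming a new cycle.'' The cleanest route is to phrase this as: the number of equal links removed equals the number of reconnections allowed, and each reconnection joins two vertices of equal value in different components (or the same component without closing a new cycle), so the cyclomatic number is preserved at exactly $2$.
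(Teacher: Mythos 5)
Your overall strategy (delete the equal links, classify each component via Theorems~\ref{trivalenttree}, \ref{trivalentuni} and \ref{trivalentbicyclic}, then reassemble) is exactly the argument the paper intends --- indeed the paper offers no written proof at all, only the remark that the theorem ``is a consequence of Theorems~\ref{bivalentbicyclic} and \ref{trivalentbicyclic}.'' However, your execution has a genuine structural flaw: the claim that ``deleting equal links cannot destroy a cycle that was not an equal link,'' so that the components still carry exactly two independent cycles, is false. Equal links can themselves be edges or chords of cycles, and the paper's own Theorem~\ref{bivalentbicyclic} relies on this (chains $P_2$ joined by equal links \emph{forming} two cycles; an even cycle plus a cycle-closing equal link). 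Consequently your two-case analysis (one bicyclic component, or two unicyclic components) misses precisely the configurations that produce item (i) of the statement --- stars $S_{2k+1}$ with $\lambda=1$ whose components contain \emph{no} cycle, both cycles being created by the equal links --- as well as graphs built from a single cycle $C_{4k}$ or $C_{3k}$ with an equal-link chord closing the second cycle (e.g.\ $C_8$ with a chord between two vertices of value $+1$, $\lambda=2$). Relatedly, your cyclomatic bookkeeping at the end is incoherent: adding an edge inside one component \emph{always} closes a cycle, so the parenthetical ``the same component without closing a new cycle'' describes nothing, and insisting that reconnections never close cycles contradicts the bicyclic hypothesis whenever the components have fewer than two cycles in total.

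Two further steps would fail as written. First, your exclusion of the diamond $B_3(3,2,3)$ cannot be repaired: by Theorem~\ref{trivalentbicyclic} the diamond is itself a trivalent bicyclic graph with no equal links (eigenvector $[1,-1,0,0]^T$, $\lambda=4$), so no argument about incompatible companion components can discard it --- a graph need not have any other components. Its absence from the printed list, like the apparent transposition of the eigenvalues $2$ and $3$ between items (ii) and (iii) relative to Theorem~\ref{trivalentbicyclic} (which your proof silently corrects by using the assignments $B_1(4k_1,4k_2), B_1(4k_1+2,4k_2+2)\mapsto\lambda=2$ and $B_1(3k_1,3k_2)\mapsto\lambda=3$), appears to be a defect of the statement itself rather than something a correct proof can deliver. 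Second, reconnecting two disjoint cycles ``by a single equal link'' does not recover $B_1(3k_1,3k_2)$: an equal link is an added \emph{edge}, yielding a $B_2$-type bridged graph (compare case (iii) of Theorem~\ref{bivalentbicyclic}, ``two even cycles connected by an equal link''), whereas $B_1$ graphs arise by \emph{identifying} vertices, an operation your decomposition never performs; the $B_1$ graphs are generators in their own right, and the bridged graphs (e.g.\ two copies of $C_6$ joined by an equal link between two $+1$ vertices, $\lambda=3$) are legitimate trivalent bicyclic graphs that your reassembly misdescribes and the printed list omits. In short, you chose the right decomposition, but the cycle accounting, the diamond case, and the two-component reassembly all need to be redone before the argument is sound.
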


\begin{figure}[h]
\centering
\includegraphics[width=14cm, height=7cm]{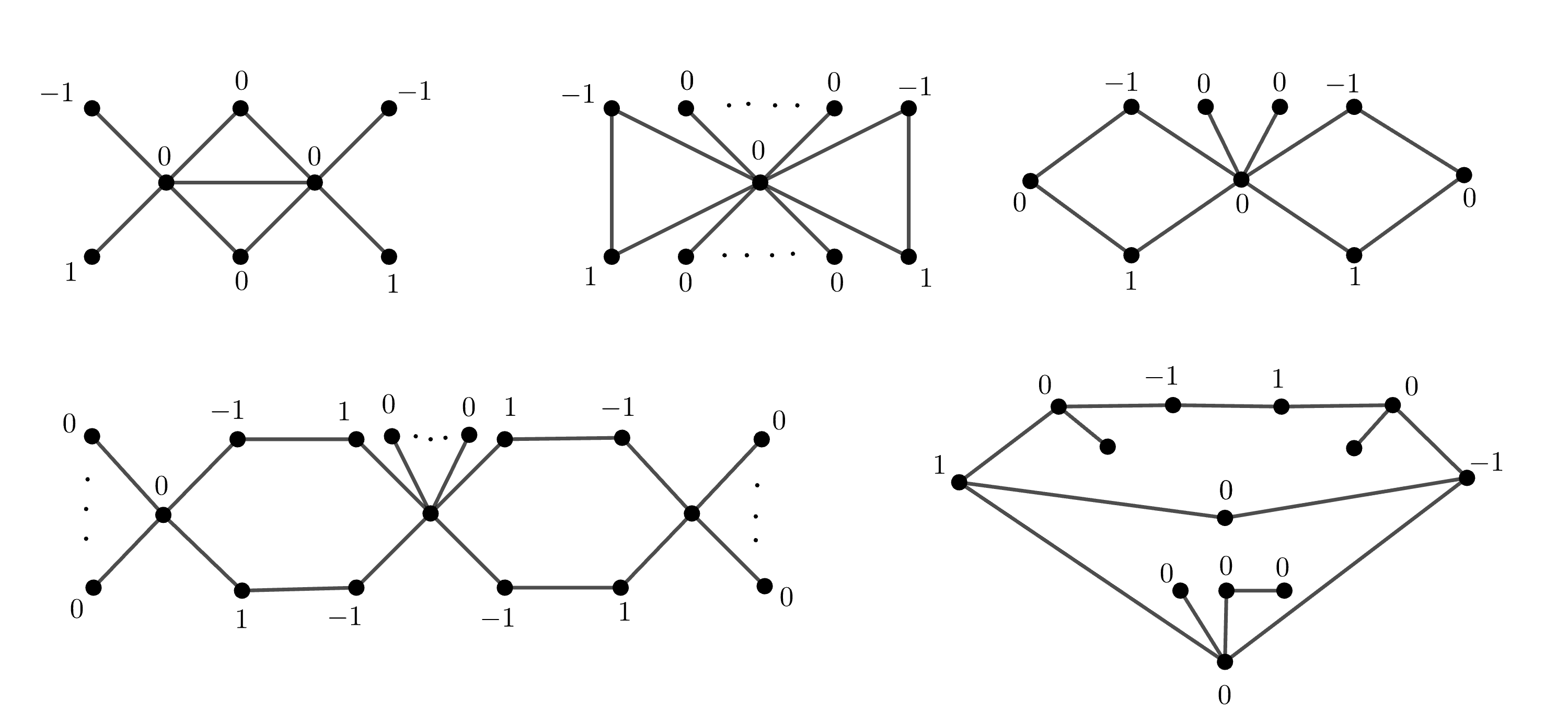}
\caption{Examples of trivalent bicyclic graphs, eigenvalues are
1,3,3,3 and 3 from top left to bottom right.} 
\label{fig:triuni2}
\end{figure}

\section{Multicyclic graphs}

\subsection{General results}

We show below that a bivalent graph without equal links
can be decomposed in perfect matchings.
\begin{theorem}[\textbf{Partition in perfect matchings}]

\label{perfMatch}
Let $G$ be a $d$-regular bipartite graph, then the edges
can be partitioned in $d$ perfect matchings.
\end{theorem}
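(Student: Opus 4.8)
The plan is to prove this by induction on the degree $d$, peeling off one perfect matching at a time via Hall's Marriage Theorem. Let $G$ have bipartition $V = X \cup Y$. First I would record that the two parts have equal size: counting the edges incident to each side gives $|E| = d\,|X| = d\,|Y|$, so $|X| = |Y|$ as soon as $d \geq 1$.

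The heart of the argument is to verify Hall's condition for $X$. For an arbitrary subset $S \subseteq X$, every edge incident to a vertex of $S$ has its other endpoint in the neighbourhood $N(S)$, so the number of such edges is exactly $d\,|S|$. On the other hand, each vertex of $N(S)$ has degree $d$, hence the edges incident to $N(S)$ number $d\,|N(S)|$, and this set of edges contains all edges leaving $S$. This yields $d\,|S| \leq d\,|N(S)|$, that is $|S| \leq |N(S)|$. By Hall's Marriage Theorem, $G$ then has a perfect matching $M$ saturating $X$, which saturates all of $V$ because $|X| = |Y|$.

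For the induction, the base case $d = 1$ is immediate since a $1$-regular graph is itself a single perfect matching. For the step, I would delete the edges of $M$ from $G$ to obtain $G' = G - M$; this graph is again bipartite on $X \cup Y$, and every vertex loses exactly one incident edge, so $G'$ is $(d-1)$-regular. By the induction hypothesis its edge set splits into $d-1$ perfect matchings, and adjoining $M$ produces the required partition of $E(G)$ into $d$ perfect matchings.

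The main obstacle is the single-matching step, i.e.\ establishing the Hall inequality $|S| \leq |N(S)|$ through the double edge count above; once one perfect matching is secured the induction runs automatically, because deleting a perfect matching preserves both bipartiteness and regularity, dropping the degree uniformly by one.
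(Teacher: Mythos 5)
Your proof is correct and follows essentially the same route as the paper: both arguments peel off one perfect matching at a time and induct on $d$, using the fact that a $(d-1)$-regular bipartite graph remains regular and bipartite. The only difference is that the paper cites K\"onig's matching theorem (via Schrijver) for the existence of the perfect matching, whereas you make that step self-contained by verifying Hall's condition $|S| \leq |N(S)|$ with a double edge count --- a harmless and slightly more elementary substitution.
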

\begin{proof}
The theorem is trivially true for $d=0$ (no edge)
or $d=1$ (matching). For $d>1$, we use the fact that
a regular bipartite graph has a perfect matching. This is
a consequence of K\"onig's matching theorem (see for example
Schjriver's book \cite{sch03}, corollary 16.2.b).
Taking out a perfect matching gives a $d-1$ regular bipartite
graph, hence the conclusion.
\end{proof}

We note that two disjoint perfect matchings in $G$ form a cycle cover of the graph (not necessarily
just one cycle). 
If the two perfect matchings complete each other well, they may form
a Hamiltonian cycle. \textcolor{black}{
Note that a regular bipartite graph is not necessarily Hamiltonian, as shown
in the counterexample in Figure \ref{fig:counterexample} built from 
three components $K_{3,3} \setminus P_2$ and two additional
vertices $\alpha,\beta$ linked to the vertices of degree 2.
This graph is regular and bipartite:  the vertex set is
$$V=(A \cup \{\alpha\}) \cup (B \cup \{\beta\})$$ .}
\begin{figure}[h]
\centering
\includegraphics[width=6cm, height=7cm]{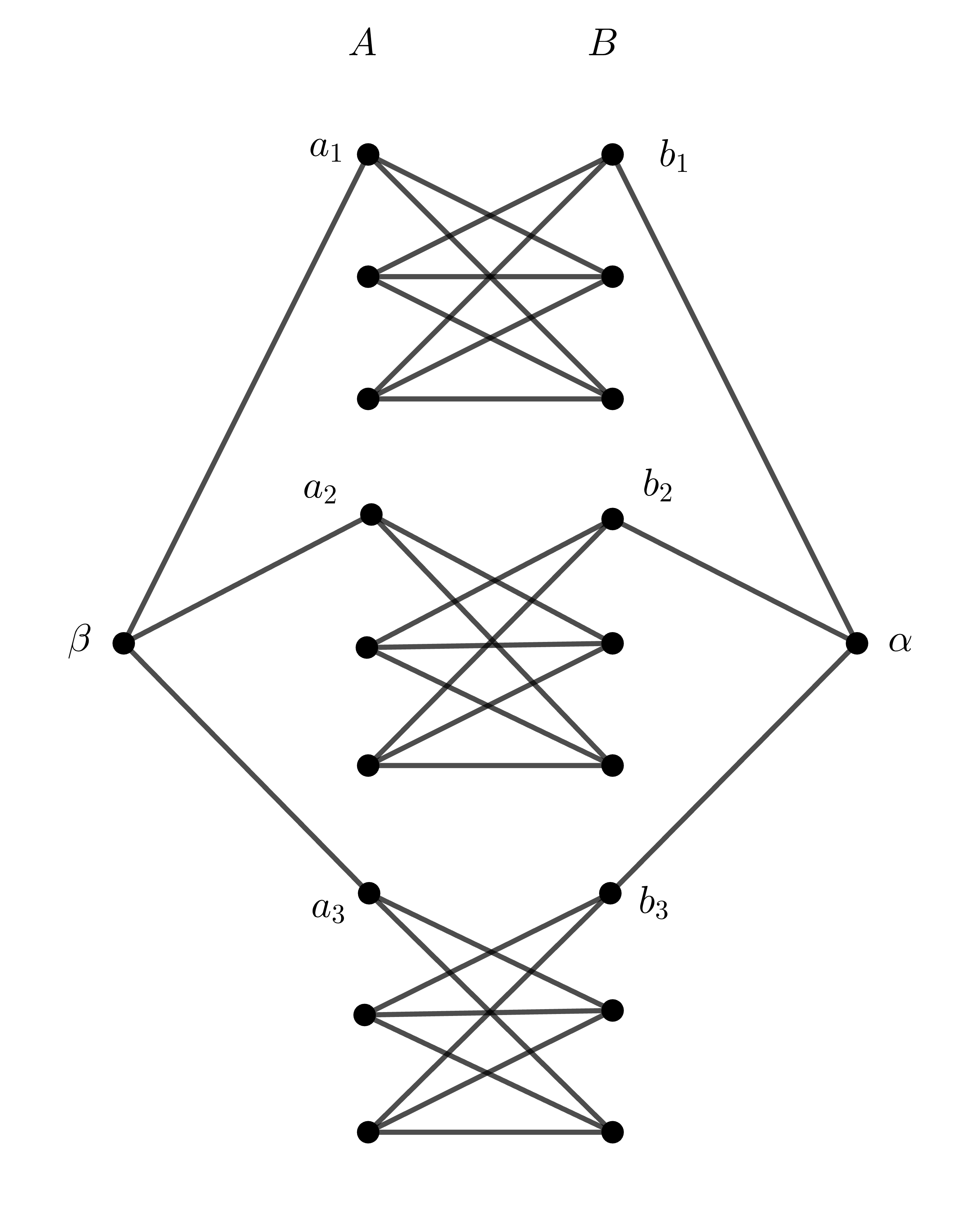}
\caption{A 3-regular non-Hamiltonian graph with two disjoint perfect matchings} 
\label{fig:counterexample}
\end{figure}

The cyclomatic number $c$ of a connected graph is defined as the number of independent cycles, and it can also be viewed as the minimum number of edges to be removed to get a tree. So,
$m = n-1+c.$

We now give a formula for the cyclomatic number $c$ of regular bipartite graphs. This
will help finding the possible values of $c$.
\begin{theorem}[\textbf{Regular bipartite multicyclic graphs}]
\label{regbiparmcycle}
The cyclomatic number of regular bipartite graphs verifies 
\begin{equation} \label{cnd}
c = { n  \over 2} (d-2)+1 ,\end{equation}
where $d$ is the degree.
\end{theorem}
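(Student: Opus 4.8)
The plan is to derive the formula by combining two elementary ingredients: the defining relation of the cyclomatic number and the handshake lemma. No structural argument about independent cycles is needed; the identity is purely a count of edges and vertices, so the proof will be essentially a one-line computation.

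First I would use the relation already recorded in the excerpt, namely $m = n - 1 + c$ for a connected graph, which rearranges to $c = m - n + 1$. Since the cyclomatic number was defined only for connected graphs, I would take connectedness as given throughout (it is implicit in the definition and in the role this formula plays in the surrounding discussion of which values of $c$ are possible).

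Next I would compute the edge count $m$ of a $d$-regular graph. Every vertex has degree exactly $d$, so the degree sum equals $nd$; by the handshake lemma this degree sum is $2m$, whence $m = nd/2$. Substituting into $c = m - n + 1$ gives $c = \tfrac{nd}{2} - n + 1 = \tfrac{n}{2}(d-2) + 1$, which is the claimed identity. (Note that $\tfrac{n}{2}$ is an integer here: a $d$-regular bipartite graph with colour classes of sizes $a$ and $b$ satisfies $ad = bd$, so $a = b$ and $n = 2a$ is even.)

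I expect no genuine obstacle: the argument terminates as soon as the handshake lemma is invoked. The only point worth flagging is that the bipartite hypothesis is not actually used in obtaining the formula itself — it follows from $d$-regularity and connectedness alone. The bipartite condition becomes relevant only afterward, when one asks which pairs $(n,d)$, and hence which values of $c$, can be realized by an \emph{honest} regular bipartite graph (for instance, the equal-part-size constraint just noted, which forces $n$ even and thus keeps $c$ an integer).
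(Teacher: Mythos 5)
Your proof is correct and follows essentially the same route as the paper's: combine the relation $m = n - 1 + c$ with the handshake identity $nd = 2m$ and substitute. Your added observation that bipartiteness is not needed for the formula itself (only for guaranteeing $n$ even, via equal colour classes) is a nice clarification but does not change the argument.
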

\begin{proof}
Assume $G$ to be a connected regular bipartite multicyclic graph.
Then it's number of edges $m$ verifies
$$m = n-1+c,$$
where $c$ is the cyclomatic number. We know that $n$ is even so
if $d$ is the degree of each vertex, we have
$$n d = 2 m .$$
From the two relations above, we get relation (\ref{cnd}).
\end{proof}

Given this formula, one sees that not all values of $c$ are
possible.
\begin{corollary}
The cyclomatic number $c$ of regular bipartite graphs verifies
$$c-1 \in \mathbf{Z}^+ - \{k^2, k \geq 2\}\cup \{ k(k+1), k \geq 1\}.$$
\end{corollary}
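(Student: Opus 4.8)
The plan is to read the corollary directly off the formula $c=\tfrac{n}{2}(d-2)+1$ of Theorem \ref{regbiparmcycle}, turning it into a purely number-theoretic question about which integers can occur as $c-1$. Writing $s=n/2$ for the common size of the two colour classes of a connected $d$-regular bipartite graph and $t=d-2$, the formula becomes $c-1=st$. Since the graph is simple, bipartite and $d$-regular on parts of size $s$, its degree satisfies $d\le s$, i.e. $t\le s-2$, equivalently $s\ge t+2$; and to have $c-1>0$ we need $d\ge 3$, i.e. $t\ge 1$. Thus the set of attainable positive values of $c-1$ is exactly
\[
\{\, st : t\ge 1,\ s\ge t+2 \,\},
\]
the set of positive integers admitting a factorization into two factors differing by at least $2$.

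First I would settle the realizability direction: for every pair $(s,t)$ with $t\ge 1$ and $s\ge t+2$ I must exhibit an actual connected $(t+2)$-regular bipartite graph on $2s$ vertices, so that the value $st$ genuinely occurs. Existence of a simple $d$-regular bipartite graph on parts of size $s$ whenever $d\le s$ follows from K\"onig's theorem / the Gale--Ryser condition (a union of $d$ edge-disjoint perfect matchings between the parts), and a circulant realization can be chosen connected for $d\ge 2$; this is the content I would borrow from Theorem \ref{perfMatch} and the surrounding discussion. This step is routine once the existence condition $d\le s$ is in hand.

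The heart of the matter --- and the step I expect to be the main obstacle --- is the arithmetic classification of the complementary set: which positive integers $m$ admit \emph{no} factorization $m=st$ with $s\ge t+2$. Here one compares $m$ with the factorization whose two factors are closest to $\sqrt m$: an integer is obstructed precisely when even this most balanced splitting already has its factors differing by at most $1$, which singles out the perfect squares $k^2$ (factors $k,k$) and the pronic numbers $k(k+1)$ (factors $k,k+1$). The delicate point is to verify that for these, and \emph{only} these, every admissible factorization is near-balanced --- in particular to control whether the extreme factorization with $t=1$ (degree $d=3$) is available --- and then to match the ranges $k\ge 2$ and $k\ge 1$ with the requirement $c-1\in\mathbf{Z}^+$. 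Carrying out this case analysis, and confirming that each non-obstructed value is realized by a concrete graph from the previous step, would complete the proof.
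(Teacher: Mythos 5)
Your reduction of the problem to the arithmetic question $c-1=st$ with $t=d-2\ge 1$ and $s=n/2\ge t+2$ is correct, and it is essentially the only reasoning available here: the paper states this corollary with no proof at all, as a direct remark on the formula of Theorem \ref{regbiparmcycle} (its surrounding text just checks $c=2$ and $c=3$ by hand and exhibits $K_{3,3}$ for $c=4$). But the step you yourself flag as "the heart of the matter" does not merely need care --- it fails, and with it the statement you are trying to prove. An integer $m$ is obstructed only if \emph{every} factorization $m=st$ has $s-t\le 1$; testing the \emph{most balanced} factorization, as you propose, gets the quantifier backwards, since a balanced splitting with difference $\le 1$ does not prevent an unbalanced splitting from being admissible. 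Indeed the extreme factorization $t=1$, $s=m$ (degree $d=3$), whose availability you correctly single out as the delicate point, is \emph{always} available for $m\ge 3$: connected cubic bipartite graphs on parts of size $s$ exist for every $s\ge 3$ ($K_{3,3}$ for $s=3$, the cube $Q_3$ for $s=4$, an even cycle $C_{2s}$ plus a bipartiteness-preserving perfect matching in general). Consequently every value $c-1=m\ge 3$ is realized, \emph{including} the squares and pronic numbers the corollary excludes: $Q_3$ is connected, $3$-regular, bipartite with $n=8$, giving $c-1=\frac{8}{2}(3-2)=4=2^2$, and the hexagonal prism $C_6\times K_2$ gives $c-1=6=2\cdot 3$.

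The correct conclusion from your own setup is that the obstructed values are exactly $m=1$ ($1=1\cdot 1$) and $m=2$ ($2=1\cdot 2$), i.e.\ the $k=1$ members of the square and pronic families, so that $c\notin\{2,3\}$ while every other value of $c$ attainable under relation (\ref{cnd}) occurs. This matches the paper's own subsequent remarks (impossibility of $c=2,3$; realization of $c=4$ by $K_{3,3}$) but contradicts the printed set $\mathbf{Z}^+ - \{k^2,\,k\ge 2\}\cup\{k(k+1),\,k\ge 1\}$, which excludes infinitely many attainable values such as $c-1=4$. So do not try to complete the case analysis as planned: carried out honestly, your plan refutes the corollary as stated rather than proving it, and the fix is to replace the excluded set by $\{1,2\}$ (equivalently, restrict the $k^2$ and $k(k+1)$ exclusions to $k=1$). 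Your realizability step (existence of connected $d$-regular bipartite graphs whenever $3\le d\le s$) is sound and is exactly what makes the counterexamples above legitimate.
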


The relation above imposes a constraint between the values of
$n,d$ and $c$. Let us consider the first few values of $d$. \\
For $d=0$, we have $n=1$ and $c=0$ so we have isolated vertices.\\
For $d=1$, we have $n=2$ and $c=0$ so the graph is $P_2$. \\
For $d=2$, the graph is a cycle and $c=1$. \\
Note also that $c=2$ is not possible because
$n/2$ and $d-2$ should be different non consecutive integers.
This is theorem \ref{bivalentbicyclic}.\\
Similarly, $c=3$ is not possible.
An example with $c=4$ is $K_{3,3}$ the complete bipartite graph 
where $n=6$ and $d=3$.

An infinite family of regular bipartite graphs is obtained
by taking an even cycle $C_{2k}$ and adding to it
$\ell$ alternate matchings, 
$k,\ell \in \mathbf{N}^*  , ~k \ge 2, ~~\ell \leq n/2 -2$.
These graphs are Hamiltonian. \\
An example is $d=4$, then $c=n+1$ and the graph is a
cycle $C_{2k}$ with $\ell=2$ matchings so $\lambda=8$.

In the following we consider a subclass of multicyclic planar graphs
called cactus graphs. 
A \textit{cactus graph} is
a connected graph in which any two cycles have at most one vertex in common. Such a graph is outerplanar: it can be represented with all vertices belonging
to the outer face. 

\subsection{Cactus bivalent graphs}
First note 
\begin{theorem}\label{bivalentcactusnolinks}[\textbf{Bivalent 
cactus graphs without equal links}] If $G$ is a cactus graph with more
than one cycle and without equal links, it cannot be bivalent.
\end{theorem}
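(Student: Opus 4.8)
The plan is to reduce the statement to a purely structural fact about cacti, using the characterization of bivalent graphs. By Theorem \ref{carbi}, every bivalent graph is a regular bipartite graph together with added equal links; since $G$ has no equal links, $G$ must itself be regular bipartite, hence $d$-regular for some $d$. So it suffices to prove that a cactus with more than one cycle cannot be $d$-regular for any $d$, in the same spirit as Lemma \ref{bivalentbicyclicnolinks} for bicyclic graphs, where one exhibits two vertices of distinct degrees.

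First I would dispose of the small degrees. If $d\le 1$, then $G$ is edgeless or a single edge and has no cycle at all; if $d=2$, then a connected $d$-regular graph is a single cycle, hence unicyclic, contradicting the hypothesis that $G$ has more than one cycle. Thus I may assume $d\ge 3$.

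Next I would exploit the block--cut-vertex structure. In a cactus any two cycles share at most one vertex, so every $2$-connected component is a single cycle and every block is therefore either a bridge or a cycle; having more than one cycle forces at least two cycle-blocks, so the block--cut-vertex tree is nontrivial and has at least two end blocks (leaves). I would then show each end block must be a cycle, since an end-block bridge would contain a non-cut endpoint of degree $1$, contradicting $d\ge 3$. Fixing an end block $C$, a cycle with unique cut vertex $u$, note that $C$ has at least three vertices, so it contains a vertex $w\neq u$ that is not a cut vertex; all neighbors of $w$ lie on $C$, whence $d_w=2$. On the other hand $u$ is a cut vertex, so beyond its two neighbors on $C$ it carries at least one further edge leaving the block, giving $d_u\ge 3$. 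Then $d_w=2\neq d_u$, contradicting $d$-regularity.

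The main obstacle, and essentially the only substantive point, is establishing that a finite cactus with at least two cycles must contain an end cycle simultaneously exhibiting a degree-$2$ vertex and a degree-$\ge 3$ cut vertex; this is exactly where the block--cut-vertex tree, together with the fact that $2$-connected cactus blocks are single cycles, does the work. The remaining steps are bookkeeping: the reduction through Theorem \ref{carbi} and the final degree count. Concluding, since no $d$-regular cactus with more than one cycle can exist, $G$ is not regular bipartite, and therefore by Theorem \ref{carbi} it cannot be bivalent.
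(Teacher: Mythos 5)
Your proof is correct, but its core combinatorial argument is genuinely different from the paper's. Both proofs start with the same reduction: by Theorem \ref{carbi}, a bivalent graph without equal links is regular bipartite, hence $d$-regular, and since a connected $2$-regular graph is a single cycle, more than one cycle forces $d\ge 3$; so everything hinges on showing that no finite cactus with at least two cycles is $d$-regular with $d \ge 3$. At this point the paper argues by \emph{infinite growth}: it fixes an outerplanar embedding, takes a chordless face cycle, and repeatedly walks outward --- each vertex of degree at least $3$ on the current cycle has a neighbor outside it, that neighbor again has neighbors off the cycle by the cactus property, and iterating would generate infinitely many vertices, contradicting finiteness. You instead argue by an \emph{extremal block argument}: every block of a cactus is a bridge or a cycle, two cycles must lie in distinct blocks, so the block--cut-vertex tree has at least two leaf blocks; a leaf bridge would give a degree-$1$ vertex, so some leaf block is a cycle $C$ containing exactly one cut vertex $u$, and any other vertex $w$ of $C$ has all its neighbors on $C$, whence $d_w = 2 < 3 \le d_u$, contradicting regularity. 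Your version is arguably tighter: the paper's expansion step is only sketched (one must check the newly reached vertices never revisit earlier ones, which does follow from the cactus property but is left implicit), whereas your leaf-block argument is a finite, self-contained contradiction resting on the standard block decomposition. What the paper's approach buys is brevity and independence from block--cut-tree machinery; what yours buys is rigor and a reusable structural fact (end blocks of a leafless cactus are cycles with a unique cut vertex), in the same degree-counting spirit as Lemma \ref{bivalentbicyclicnolinks} for the bicyclic case.
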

\begin{proof}
We show that it is not possible to build a finite d-regular cactus. \\
Consider a bivalent cactus graph without equal links, it is
$d$ regular. We choose an outer planar representation of the graph.
Take a face of this representation, it is a cycle without a chord.
Because the graph is connected and has at least two cycles, the
degree of each vertex is at least 3.
Take a vertex $u$ of this cycle, it is connected to another vertex $v$
outside the cycle. Vertex $v$ has degree at least 3 and is connected
to vertices $v_1$ and $v_2$ not belonging to the first cycle.
Otherwise the cactus hypothesis would be wrong.
Repeating the process for $v_1$ we generate an infinite graph.
\end{proof}

Then we have
\begin{theorem}\label{bivalentcactus}[\textbf{Bivalent 
cactus graphs}] A bivalent cactus graph, is either 
formed by cycles of even length ($\lambda=4$) connected by
equal links or formed by chains $P_2$ connected by
equal links ($\lambda=2$).
\end{theorem}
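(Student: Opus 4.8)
The plan is to follow the same reduction used for the bicyclic case in Theorem~\ref{bivalentbicyclic}: start from a bivalent cactus $G$ with bivalent eigenvector $\mathbf{v}$ and eigenvalue $\lambda$, strip off all equal links, classify the resulting pieces, and then reassemble. First I would delete every equal link of $G$; by the edge principle (Theorem~\ref{L2S}), $\mathbf{v}$ remains an eigenvector, for the same eigenvalue $\lambda$, of the graph $G'$ so obtained. Since deleting edges can only destroy cycles and can never force two cycles to share more than one vertex, $G'$ is a disjoint union of cactus graphs, each without equal links, and the restriction of $\mathbf{v}$ to each connected component is a bivalent eigenvector of that component for the same $\lambda$.

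Next I would classify these components. By Theorem~\ref{bivalentcactusnolinks}, a bivalent cactus without equal links cannot have more than one cycle, so each component is either a tree or a unicyclic graph. A tree component has a leaf, hence by Theorem~\ref{lgraph} (equivalently, Theorem~\ref{trivalenttree}) it must be a chain $P_2$, which carries $\lambda=2$. A unicyclic component without equal links is, by Theorem~\ref{trivalentuni}, an even cycle $C_{2k}$, which carries $\lambda=4$.

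The dichotomy then follows from eigenvalue consistency: $\mathbf{v}$ is a single eigenvector with a single eigenvalue, so the components cannot simultaneously realize $\lambda=2$ and $\lambda=4$. Hence either every component is a $P_2$ and $\lambda=2$, or every component is an even cycle and $\lambda=4$. Adding the equal links back (again by Theorem~\ref{L2S}) recovers $G$, giving exactly the two families in the statement.

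The only point requiring care is the degenerate $\lambda=0$ case and, with it, the possibility that deleting equal links isolates a vertex. An isolated component forces $\lambda=0$, which is inconsistent with the values $\{2,4\}$ above; conversely, any component containing an edge has, after the deletion, only non-equal links, so that edge joins a $+1$ to a $-1$ and the component is genuinely non-constant with $\lambda\neq 0$. Thus as soon as $G'$ retains one edge we are in the nontrivial regime and no isolated component can occur, while the purely trivial situation (every edge of $G$ an equal link, so $\mathbf{v}$ is constant and $\lambda=0$) is excluded by the convention that a bivalent eigenvector is nontrivial. This bookkeeping, together with the verification that edge deletion preserves the cactus property, is the only real obstacle; the structural classification itself is immediate from the cited theorems.
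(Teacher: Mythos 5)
Your proof is correct and follows essentially the same route as the paper's: remove the equal links, invoke Theorem~\ref{bivalentcactusnolinks} to conclude each component has at most one cycle, and classify the components via Theorems~\ref{trivalentuni} and~\ref{trivalenttree} before reassembling with the edge principle. Your extra bookkeeping (eigenvalue consistency across components, and ruling out isolated vertices and the constant $\lambda=0$ eigenvector) is sound and merely makes explicit details the paper's terser proof leaves implicit.
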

\begin{proof}
Consider a cactus graph. Taking out equal links
we obtain $p$ connected components. \\
Consider one of these connected components, it cannot have more
than one cycle because of Theorem \ref{bivalentcactusnolinks}. Assume there is just one
cycle, then from Theorem \ref{trivalentuni} this component is just 
a cycle $2k$ ($\lambda=4$). If this component has no cycle then
it is just a chain $P_2$ from Theorem \ref{trivalenttree}.
\end{proof}

\subsection{Cactus trivalent graphs}

As a consequence of Theorem \ref{trivalentbicyclic}, we obtain all trivalent cactus graphs without equal links.
\begin{theorem}\label{trivalentcactusnolinks} [\textbf{Trivalent cactus 
graphs without equal links}] Let $G$ be a trivalent cactus graphs without 
equal links, only three cases are possible:
\begin{itemize}
\item $\lambda=1,$ then $G$ is a chain star $S_{2k+1};$
\item $\lambda = 2$, then $G$ should be obtained by identifying soft nodes of cycles with length multiple of $4;$
\item $\lambda = 3$, then $G$ should be obtained by identifying soft nodes of cycles with length multiple of $3.$

\end{itemize}
\end{theorem}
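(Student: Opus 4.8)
The plan is to reduce to the earlier results through the block structure of the cactus. First I would split off the trivial cases. If $G$ has a leaf, Theorem~\ref{lgraph} forces $G$ to be a chain $P_2$ ($\lambda=2$) or a chain star $S_{2k+1}$ ($\lambda=1$), the latter being the first case of the statement. If $G$ has no leaf and exactly one cycle, Theorem~\ref{trivalentuni} gives the cycles $C_{4k}$ ($\lambda=2$) and $C_{3k}$ ($\lambda=3$), together with the purely bivalent $C_{2k}$ ($\lambda=4$). So I may assume $G$ has no leaf and at least two cycles; since $G$ is a cactus, every vertex then lies on a cycle and $G$ is a tree of cycles glued at cut vertices. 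I would also observe that there is no bridge or connecting path joining two cycles: the degree-$3$ junction vertex at the end of such a path would, exactly as in the exclusion of $B_2(p,q,r)$ with $r>0$ in Theorem~\ref{trivalentbicyclic}, require a zero neighbour forcing an equal link. Hence the cycles are identified directly at cut vertices.

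Next I would bound $\lambda$. Take a cycle $C$ that is a leaf of the block-cut tree, so $C$ meets the rest of $G$ in a single cut vertex and all its remaining vertices have degree $2$. Two adjacent vertices cannot both be $0$ (that is an equal link), so $C$ carries a non-soft vertex $v$ of degree $2$, and Theorem~\ref{thm:tri_graphs}(i) gives $\lambda = d_v + \bar d_v = 2 + \bar d_v \in \{2,3,4\}$.

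The heart of the argument is to exclude $\lambda = 4$ and to show that every cut vertex is soft. Suppose $\lambda = 4$. Then the non-soft degree-$2$ vertex $v$ above has $\bar d_v = 2$, so both of its neighbours are non-soft; iterating this along the degree-$2$ vertices of the leaf cycle $C$, every vertex of $C$, and in particular its cut vertex $u$, is non-soft, the values alternating $+1,-1$ since there are no equal links. But then the two cycle-neighbours of $u$ are non-soft and opposite to $u$, so $\bar d_u \ge 2$, while $d_u \ge 4$ because $u$ is a cut vertex; Theorem~\ref{thm:tri_graphs}(i) then gives $\lambda = d_u + \bar d_u \ge 6$, a contradiction. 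Hence $\lambda \in \{2,3\}$. Now any cut vertex $w$ has $d_w \ge 4$, so if $w$ were non-soft we would get $\lambda = d_w + \bar d_w \ge 4 > 3$; therefore every cut vertex is soft.

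Finally, with every cut vertex valued $0$, I would read each cycle off exactly as in the proof of Theorem~\ref{trivalentbicyclic}. For $\lambda=2$ each non-soft vertex has $\bar d = 0$, so its neighbours are $0$ and the cycle repeats the pattern $(1,0,-1,0)$, forcing its length to be a multiple of $4$; for $\lambda=3$ each non-soft vertex has exactly one non-soft neighbour, so the cycle repeats $(1,0,-1)$ and its length is a multiple of $3$. The cut vertices sit at the $0$-positions of these patterns, so $G$ is precisely a union of such cycles identified at soft nodes, as claimed. I expect the main obstacle to be the third paragraph: ensuring that the forced $\pm1/0$ pattern closes up consistently as a cycle passes through one (or several) soft cut vertices, and that $\lambda=4$ is cleanly ruled out, is exactly where the cactus hypothesis — cycles meeting in a single vertex and arranged in a tree — must replace the purely local computation available in the unicyclic and bicyclic cases.
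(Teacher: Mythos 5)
Your overall architecture tracks the paper's proof closely: the leaf case via Theorem~\ref{lgraph}, exclusion of shared-edge ($B_3$-type) configurations by the cactus hypothesis, exclusion of connecting paths by the same argument that rules out $B_2(p,q,r)$ with $r>0$, reduction to cycles glued at cut vertices, and the degree count showing cut vertices must be soft (the paper phrases this as: a non-soft identified vertex $i$ would give $\lambda < d_i + \bar d_i$). Your additions --- the explicit bound $\lambda = 2 + \bar d_v \in \{2,3,4\}$ from a non-soft degree-$2$ vertex on a leaf block of the block-cut tree, and the propagation argument eliminating $\lambda = 4$ --- are sound and actually tighter than the paper, which at the corresponding point simply invokes Theorem~\ref{trivalentbicyclic} wholesale.

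However, the obstacle you flagged in your final paragraph is a genuine gap, and for $\lambda = 2$ it cannot be closed as you stated it. Your pattern argument correctly shows that each cycle alternates soft/non-soft vertices (hence has even length $2m$) and that each \emph{degree-2} soft vertex forces sign alternation of the non-soft values on either side of it; but a soft \emph{cut} vertex imposes no per-cycle constraint, because the balance condition of Theorem~\ref{thm:tri_graphs}(ii) is over all its neighbors in all incident cycles. Consequently a cycle with $m$ odd (length $\equiv 2 \pmod 4$) can pass through a cut vertex provided another cycle compensates there: two hexagons identified at a single soft vertex, valued $0,1,0,-1,0,1$ and $0,-1,0,1,0,-1$ around the respective cycles, form a trivalent cactus without equal links affording $\lambda = 2$, yet its cycles have length $6$, not a multiple of $4$. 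This is exactly the family $B_1(4k_1+2,4k_2+2)$ that the paper's own Theorem~\ref{trivalentbicyclic}(i) lists for eigenvalue $2$ (see $B_1(6,6)$ in Figure~\ref{fig:bicyclic_trivalent}), so your step ``forcing its length to be a multiple of 4'' is false as a deduction, and the theorem's statement itself is inconsistent with the bicyclic classification on this point. (The paper's proof shares the defect: it asserts that cycle lengths are ``multiple to 3 or 4'' by appeal to the bicyclic theorem, silently dropping the $4k+2$ case.) Your $\lambda = 3$ closure, by contrast, is genuinely fine: there the blocks $(0,1,-1)$ are forced locally --- a $+1$ has exactly one non-soft neighbor, necessarily $-1$, flanked by soft vertices --- so length divisible by $3$ holds even across cut vertices, with no global compensation available to break it.
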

\begin{proof}
From Theorem \ref{lgraph}, if $G$ has a leaf then it should be a chain star $S_{2k+1}.$ Now, consider that $G$ does not have a leaf. By hypothesis, $G$ cannot have a subgraph of the type $B_3(p,q,r)$; otherwise, $G$ would have two cycles with \textcolor{black}{$r-1$} vertices in common. From Theorem \ref{trivalentbicyclic}, $G$ cannot have a path connecting two cycles since there is no trivalent bicyclic graph in \textcolor{black}{$B_2(p,q,r).$} So, $G$ is composed of cycles identified by, at most, one vertex.  Also, in light of the results of Theorem \ref{trivalentbicyclic}, 
$G$ is obtained by identifying cycles of lengths multiple to $3$ or $4$. Notice that each pair of cycles should be identified by their soft nodes; otherwise, the degree of a vertex $i$ of value $+1$ (or $-1$) would increase while $\lambda$ is fixed, and then $\lambda < d_{i} + \bar{d}_{i}.$ The proof is complete. 
\end{proof}

Figure \ref{fig:trivalentcactus} shows a trivalent cactus graph.

\begin{figure}[!ht]
\centering
\includegraphics[height=5.5cm]{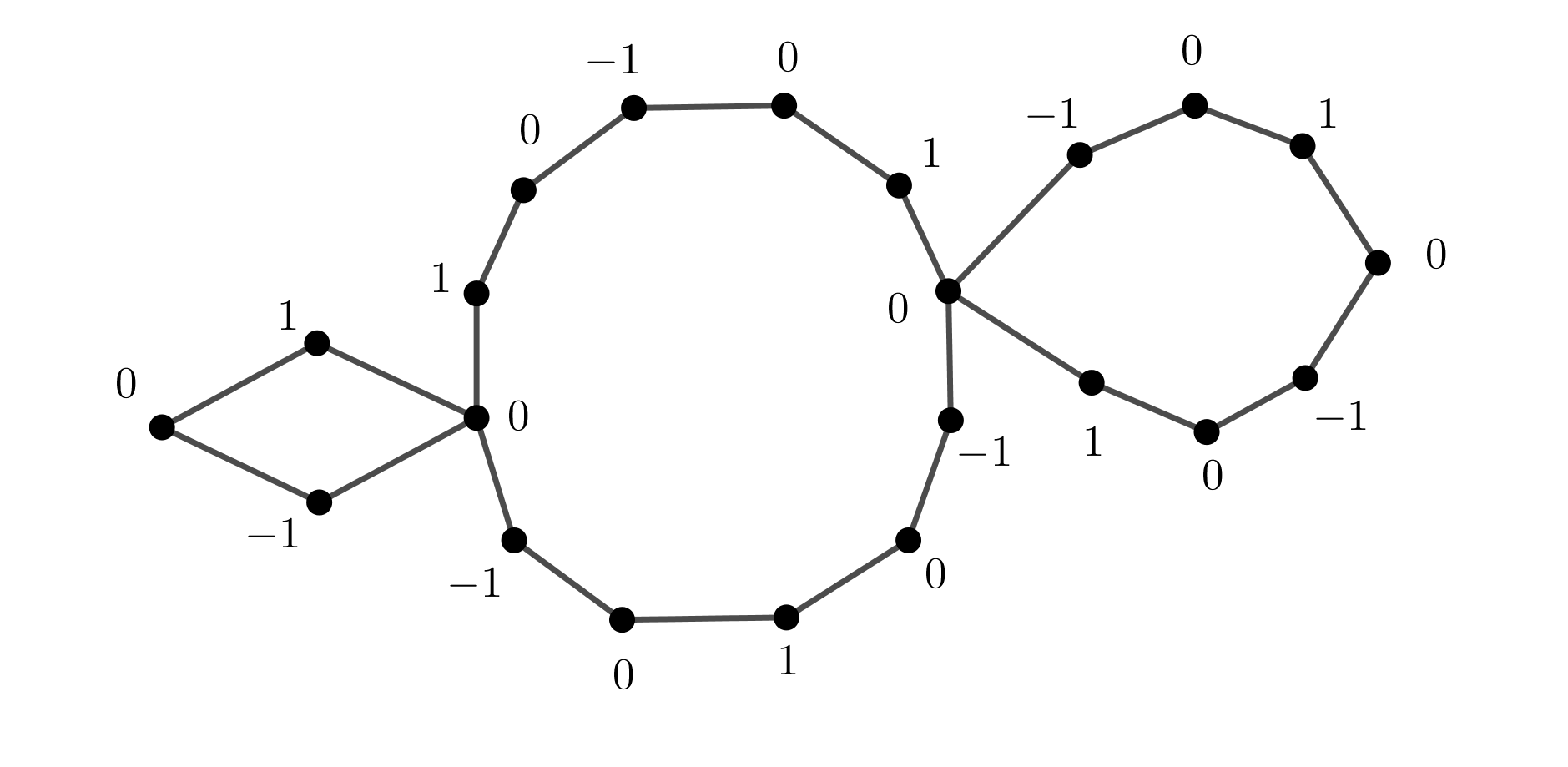}
\caption{Example of a trivalent cactus with eigenvalue 2. }
\label{fig:trivalentcactus}
\end{figure}

Then we have
\begin{theorem}\label{trivalentcactus}[\textbf{Trivalent 
cactus graphs}] Trivalent cactus graph are
\begin{itemize}
\item if $\lambda = 1$, collection of stars $S_{2k+1}$ for $k\geq 1$ connected by equal links;
\item if $\lambda = 2$, $P_2$ and cycles $C_{4k}$ intersecting at zero vertices
and zero vertices, all connected by equal links;
\item if $\lambda = 3$, cycles $C_{3k}$ intersecting at zero vertices
and zero vertices, all connected by equal links,
\end{itemize}
and extending with soft nodes.
\end{theorem}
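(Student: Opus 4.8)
The plan is to prove Theorem \ref{trivalentcactus} as a direct consequence of the structural decomposition already established, mirroring the logic used for unicyclic graphs in Theorem \ref{bitrivalent} and for bicyclic graphs in Theorem \ref{trivalentbicyclicgeneral}. The core idea is the \emph{equal-link decomposition}: given any trivalent cactus graph $G$ with eigenvector $\mathbf{v}$, I would delete all equal links to obtain a collection of connected components $G_1, \ldots, G_p$. By the edge principle (Theorem \ref{L2S}), each $G_i$ retains $\mathbf{v}$ restricted to its vertices as an eigenvector for the \emph{same} eigenvalue $\lambda$, and each $G_i$ is itself a trivalent cactus graph without equal links. Conversely, by Theorem \ref{L2S} again, adding equal links back (and, for the trivalent case, extending with soft nodes via Theorem \ref{articulation}) preserves the eigenvector and eigenvalue. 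This establishes the equivalence between the assembled graphs and the building blocks.

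First I would invoke Theorem \ref{trivalentcactusnolinks}, which already classifies each equal-link-free component into exactly one of three types according to its eigenvalue: a chain star $S_{2k+1}$ (forcing $\lambda=1$), a graph built from cycles $C_{4k}$ identified at soft nodes (forcing $\lambda=2$), or a graph built from cycles $C_{3k}$ identified at soft nodes (forcing $\lambda=3$). The key observation is that all components sharing the eigenvector $\mathbf{v}$ must afford the \emph{same} eigenvalue $\lambda$, so the three cases cannot mix: either every component is of the $\lambda=1$ type, or every component is of the $\lambda=2$ type, or every component is of the $\lambda=3$ type. This immediately produces the three-way split in the statement. I would then translate each case: for $\lambda=1$ the components are stars $S_{2k+1}$; for $\lambda=2$ the components are $P_2$'s and cycles $C_{4k}$ (the chain $P_2$ arises as the degenerate tree component, exactly as in the bivalent and bicyclic analyses); for $\lambda=3$ the components are cycles $C_{3k}$.

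The main obstacle—and the point requiring the most care—is the phrase ``intersecting at zero vertices and zero vertices, all connected by equal links.'' I read this as asserting that after removing equal links the building blocks are vertex-disjoint (they intersect in no vertices), so that reassembly is purely by adding equal links between components rather than by vertex identification. I would justify this by noting that within a single equal-link-free component, cycle identification at a \emph{non-soft} node is forbidden (this is the degree argument already given in the proof of Theorem \ref{trivalentcactusnolinks}: identifying at a $\pm 1$ vertex would raise $d_i + \bar d_i$ above the fixed $\lambda$), and identification is only permitted at soft nodes \emph{internally} to a component of type $\lambda=2$ or $\lambda=3$. Across distinct components the connection is strictly by equal links. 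I would therefore state carefully that the cactus structure is preserved because equal links join equal-valued vertices and adding them does not create a cycle (by hypothesis on the decomposition), while soft-node extensions attach pendant soft vertices without altering the eigenvalue (Theorem \ref{articulation}). The remaining verification—that the reassembled graph is still a cactus (any two cycles share at most one vertex) and that no new cycle is created by the equal links—is routine once the disjointness of the non-trivial blocks is established, and I would dispatch it in one or two sentences rather than grinding through the combinatorial bookkeeping.
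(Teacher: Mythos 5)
Your overall scaffolding (delete equal links, classify components, reassemble via Theorem \ref{L2S} and Theorem \ref{articulation}) matches the paper's proof, but there is a genuine gap at the classification step. You assert that each equal-link-free component ``is itself a trivalent cactus graph without equal links'' and then invoke Theorem \ref{trivalentcactusnolinks} as the sole classifier. This is false: the restriction of a trivalent eigenvector to a component need not be trivalent. It can be \emph{bivalent} (all entries $\pm 1$, as in a $P_2$ with $[1,-1]^T$ or an even cycle $C_{2k}$ with alternating signs and $\lambda=4$), or \emph{identically zero} (a component of soft nodes, whose restriction is the zero vector and hence not an eigenvector at all, so it imposes no constraint on $\lambda$ --- this is why zero components, e.g.\ the all-soft $C_5$ in Figure \ref{fig:triv_cactus_general}, are compatible with every case). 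The paper's proof spends two of its three cases on exactly these possibilities. Your proposal smuggles $P_2$ into the $\lambda=2$ case by calling it ``the degenerate tree component,'' but $P_2$ with values $\pm1$ is not in the list of Theorem \ref{trivalentcactusnolinks}, so your cited classification does not produce it. More seriously, you never exclude a bivalent $C_{2k}$ component with $\lambda=4$: since $4\notin\{1,2,3\}$, such a component cannot coexist with any trivalent component, and equal links cannot join a zero vertex to a $\pm1$ vertex, so connectivity would force the whole graph to be bivalent rather than trivalent --- this contradiction argument is the final paragraph of the paper's proof and is entirely absent from yours. Relatedly, your ``cannot mix'' eigenvalue argument needs the caveat that only components with nonzero restriction constrain $\lambda$; zero components mix freely with all three cases.

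A secondary point: you read ``intersecting at zero vertices'' as ``intersecting in no vertices'' (vertex-disjointness). In the paper's usage, ``zero vertices'' means soft (zero-valued) vertices: by Theorem \ref{trivalentcactusnolinks}, cycles within a single equal-link-free component \emph{are} identified at soft nodes, and the extra ``and zero vertices'' refers to the all-zero components above. You partially recover this when you allow soft-node identification ``internally,'' so your final structural picture is roughly correct, but the first half of that paragraph (disjoint blocks, reassembly purely by equal links) contradicts the second and should be discarded. With the bivalent and zero components handled as in cases 1--2 of the paper's proof and the $\lambda=4$ exclusion added, your argument would close.
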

\begin{proof}
Consider a trivalent cactus graph. Taking out equal links
we obtain $p$ connected components. 
Consider one of these connected components, it falls into
the following three cases: \\
{\bf case 1} Isolated vertices. These vertices all have value 0,
otherwise the conditions for a trivalent graph are not satisfied,
see Theorem \ref{thm:tri_graphs}. \\
{\bf case 2} Bivalent. If no cycle is present, the component is
$P_2$ (see Theorem \ref{trivalenttree}). \\
If a cycle is present, it is unique, and the component is reduced to
this cycle. From Theorem \ref{bivalentcactusnolinks}, this cycle is
$C_{2k}$ and $\lambda=4$. \\
{\bf case 3} Trivalent. Then from Theorems \ref{bitrivalent} and \ref{trivalentcactusnolinks}
the component is either \\
(a) a star $S_{2k+1}$ with $0$ in the center and an equal number $k$ of vertices $+1$ and $-1$, $\lambda=1$;\\
(b) a collection of cycles $C_{4k}$ intersecting at zero vertices, $\lambda=2;$  \\
(c) a collection of cycles $C_{3k}$ intersecting at  zero vertices, $\lambda=3$ . 

\vspace{0.1cm}

\noindent Putting back the links, if there exists a cycle $C_{2k}$ and $\lambda=4$
(case 2) then all connected components should be even cycles
so the graph is bivalent and not trivalent. This contradicts the hypothesis.
This rules out bivalent components consisting of a cycle $C_{2k}$; 
however bivalent $P_2$ components are allowed for $\lambda=2$. \\
To conclude, there should be at least one trivalent component (case 3) connected to 
other components with the same $\lambda$. 
\end{proof}

Note that the zeros forming one of the connected components (case 1) can result in a cycle. Figure \ref{fig:triv_cactus_general} shows a trivalent cactus graph with equal links. The trivalent cactus graph on the left of the first row ($\lambda=2$) is obtained by identifying $C_8$ with $C_4$ in a soft node, and then adding an equal link of the resultant graph to $P_2$, and extending the graph with a cycle $C_5$ where all nodes are soft. The other graph of the first row ($\lambda=3$) is obtained by adding an equal link between $C_6$ and $C_3,$ and extending with soft nodes of a tree and a cycle $C_4.$  The graph of the bottom ($\lambda=1$) is obtained by connecting the star $S_3, S_3$ and $S_5$ by equal links.

\begin{figure}[!ht]
\centering
\includegraphics[height=9cm]{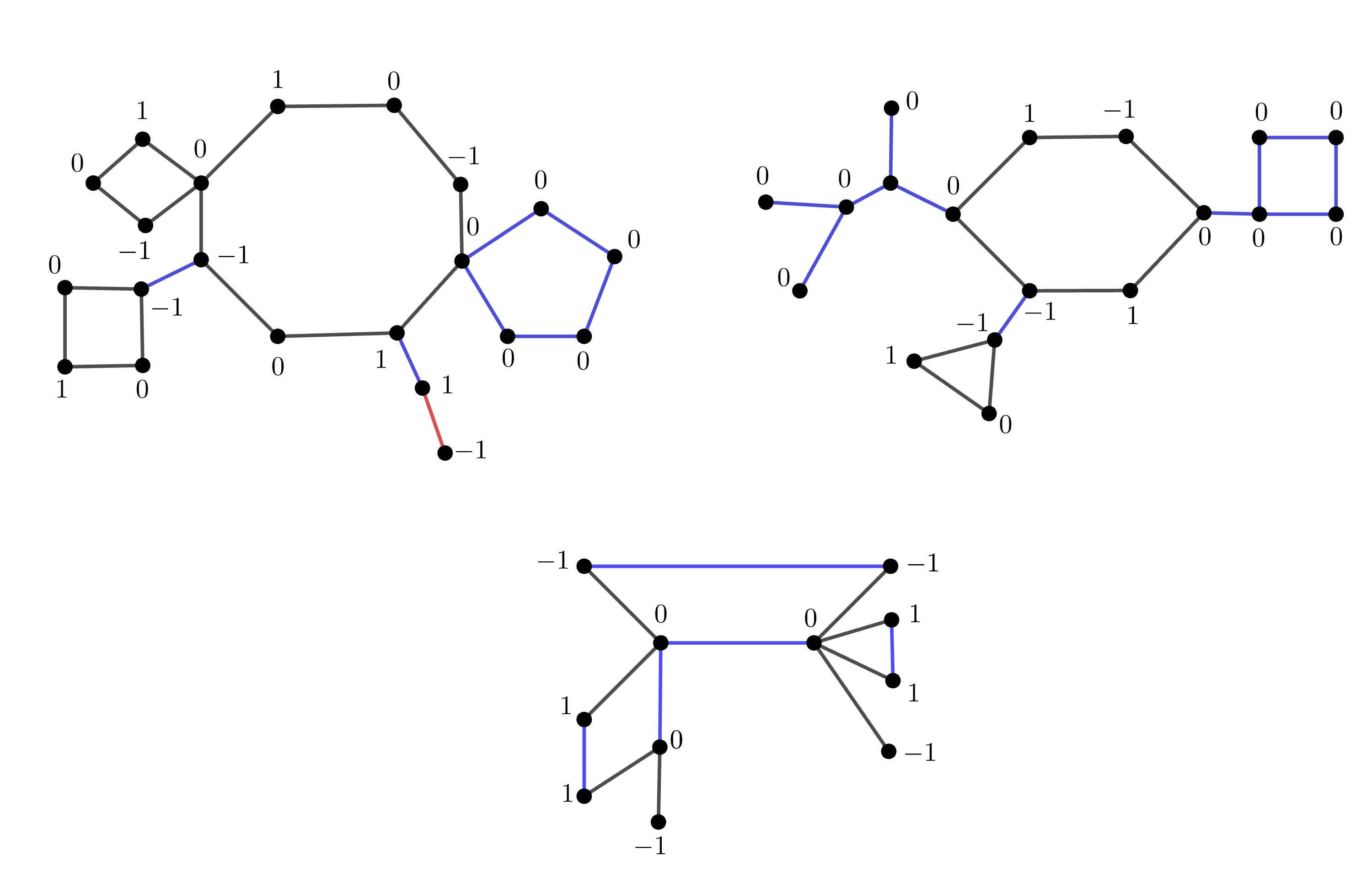}
\caption{Examples of cactus graphs with equal links. Blue links are equal links, and the red link illustrates a $P_2$ connected by an equal link to the cycle $C_8.$}
\label{fig:triv_cactus_general}
\end{figure}

\vspace{1cm}

\noindent{\bf Acknowledgments.} The research of Leonardo de Lima is supported by CNPq grants 315739/2021-5 and 403963/2021-4.\\


\begin{thebibliography}{1}
\bibitem{crs01} D. Cvetkovic, P. Rowlinson and S. Simic, ``An Introduction to the Theory of Graph Spectra'',  London Mathematical Society Student Texts (No. 75), (2001).
\bibitem{cks13} J.-G. Caputo, A. Knippel and E. Simo, ``Oscillations of simple networks: the role of soft nodes'', J. Phys. A: Math. Theor. 46, 035100 (2013)
\bibitem{ckka18} J. G. Caputo, I. Khames, A. Knippel and A. Aceves, ``Localized solutions of nonlinear wave equations on networks'', J. Phys A  52, 3, (2018).  \\
\bibitem{ckk22} J.-G.Caputo, I. Khames and A. Knippel, Nonlinear normal modes in a network with cubic couplings, AIMS Mathematics, 7(12): 20565–20578,  (2022).

\bibitem{WHD} M. Adm, K. Almuhtaseb, S. Fallat, K. Meagher, S. Nasserasr, M. N. Shirazi, A.S. Razafimahatratra, Weakly Hadamard diagonalizable graphs, \emph{Linear Algebra and its Applications} (610)  (2021), 86--119.
\bibitem{AL21} J. Alencar, L. de Lima, On graphs with adjacency and signless Laplacian matrices eigenvectors entries in $\{-1,+1\}$, Linear Algebra and its Applications (614), 301-315, (2021). 
\bibitem{ALN23} J. Alencar, L. de Lima, V. Nikiforov, On graphs with eigenvectors in $\{-1,0,1\}$ and the max $k$-cut problem, Linear Algebra and its Applications (663) 222--240 (2023).
\bibitem{barik2011} S. Barik, S. Fallat, S. Kirkland, On Hadamard diagonalizable graphs, \emph{Linear Algebra and its Applications} (435)  (2011), 1885--1902.
\bibitem{dam18} J. G. Caputo, I. Khames, A. Knippel, On graph Laplacians eigenvectors with components in $\{1,-1,0\}$,  Discrete Applied Math, 269, 120-129, (2019).

\bibitem{AAGK06} S. Akbari, A. Alipour, E. Ghorbani, G. Khosrovshahi, $\{-1,0,1\}$-Basis for the null space of a forest,
Linear Algebra and its Applications,
Volume 414, Issues 2–3, 2006,
506-511.
\bibitem{S08} T. Sander, On Certain Eigenspaces of Cographs, The Electronic Journal of Combinatorics (15) (2008) R140.
\bibitem{SSK07} T. Sander,  J.W. Sander, M. Kriesell, On simply structured kernel bases of unicyclic graphs. In AKCE J. Graphs. Combin. (2007), vol. 4, pp. 61–82.


\bibitem{merris98}
R. Merris,
\emph{Laplacian graph eigenvectors}.
Linear Algebra and its Applications 278, 221-236 (1998).

\bibitem{He2007} C.X. He, Y. Liu, J.Y Shao,  On the spectral radii of bicyclic graphs, {\it J. Math. Res. Exp. \/}{\bf 30} (2007) 93-99.



\bibitem{sch03}  
Alexander Schrijver,
Combinatorial Optimization, Polyhedra and Efficiency,
Springer, (2003).

\end{thebibliography}
\end{document}